\newenvironment{proof}[1][Proof]{\textbf{#1.} }{\ \rule{0.5em}{0.5em}}
\newtheorem{theorem}{Theorem}[section]
\newtheorem{proposition}[theorem]{Proposition}
\newtheorem{remark}[theorem]{Remark}
\newcommand{\Frac}[2] {\frac{\textstyle #1} {\textstyle #2}}
\newcommand{\ds}{\displaystyle}
\newcommand{\R}{\mathbb{R}}
\begin{document}

\title{A Stabilized bi-grid method for Allen Cahn equation in Finite Elements}
\maketitle \centerline{\scshape Hyam Abboud$^{1}$, Clara Al
Kosseifi$^{2,3}$ and Jean-Paul Chehab$^2$}
\medskip
{
  \centerline{$^{1}${\footnotesize
D\'epartement de math\'ematiques, Facult\'e des Sciences II,
Universit\'e Libanaise, Fanar, Liban}}
\centerline{$^{2}${\footnotesize Laboratoire Ami\'enois de
Math\'ematiques Fondamentales et Appliqu\'ees (LAMFA), {\small UMR
CNRS} 7352}}
  \centerline{{\footnotesize Universit\'e de Picardie Jules Verne, 33 rue Saint Leu, 80039 Amiens France} }
\centerline{$^{3}${\footnotesize Laboratoire de Physique
Appliqu\'ee (LPA), Facult\'e des Sciences II, Universit\'e
Libanaise, Fanar, Liban}}

\date{}

\begin{abstract}
In this work, we propose a bi-grid scheme framework for the
Allen-Cahn equation in Finite Element Method. The new methods are
based on the use of two FEM spaces, a coarse one and a fine one,
and on a decomposition of the solution into mean and fluctuant
parts. This separation of the scales, in both space and frequency,
allows to build a stabilization on the high modes components: the
main computational effort is concentrated on the coarse space on
which an implicit scheme is used while the fluctuant components of
the fine space are updated with a simple semi-implicit scheme;
they are  smoothed without damaging the consistency. The
numerical examples we give show the good stability and the
robustness of the new methods. An important  reduction of the
computation time is also obtained when comparing our methods with
fully implicit ones.
\end{abstract}
{\small
{\bf Keywords:} {Allen-Cahn equation, bi-grid method, stabilization, separation of the scales}\\
\hskip 0.2in{\bf  AMS Classification}[2010]: {35N57, \ 65L07,\ 65M60, \ 65N55}}\\
\section{Introduction}
Phase fields equations, such as Allen-Cahn's, are widely used in
several domains of applied sciences for modeling natural phenomena
in material science \cite{AllenCahn,Emmerich}, in image processing
\cite{LiJeongChoiLeeKim} or in ecology and in medicine
\cite{JiangShi}, just to cite but a few. They are written as
\begin{eqnarray}
\Frac{\partial u}{\partial t} +M(-\Delta u +\Frac{1}{\epsilon^2}f(u))=0, & x\in \Omega, t>0,\\
\Frac{\partial u}{\partial \nu}=0,\\
u(0,x)=u_0(x),
\end{eqnarray}
where $\Omega$ is a bounded open set in $\R^n$  and $\nu$ the unit normal vector. The Allen-Cahn
equation was introduced to describe the process of phase
separation in iron alloys \cite{AllenCahn1,AllenCahn2}, including
order-disorder transitions: $M$ is the mobility (taken to be 1 for
simplicity), $F=\displaystyle{\int_{-\infty}^u f(v)dv}$ is the
free energy, $u$ is the (non-conserved) order parameter and
$\epsilon >0$ is the interface length. The homogenous Neumann
boundary condition implies that there is no loss of mass outside
the domain $\Omega$. It is important to note that there is a
competition between the  potential term and the diffusion term:
this produces a regularization in phase transition. This equation
can be also viewed as a gradient flow for the energy
$E(u)=\ds\Frac{1}{2}\int_{\Omega}\|\nabla u\|^2dx
+\ds\Frac{1}{\epsilon^2}\int_{\Omega} F(u)dx$. A generic and
important consequence is that $\ds\Frac{d E(u)}{d t}\le 0$, the
energy is time-decreasing along the solutions, this is a stability
property that is important to be recovered numerically; we refer
the reader to recent works on numerical method for gradient flows
applied to these equations \cite{BMerletMPierre}.\\

The presence of the small parameter $\epsilon$
leads to numerical difficulties: it makes the functional $E(u)$
"very non convex" in the sense that it possesses many local minima.
Therefore, the use of semi-implicit methods suffers from a hard
time step limitation while the use of implicit schemes allows to
obtain energy diminishing methods but have an important cost in
CPU time; however,  as underlined in this paper, they present practical
difficulties for their implementation.
To combine the fast iterations of the semi-implicit schemes (only
a linear problem has to be solved at each time step) and the
good stability of the implicit schemes, stabilization methods have
been considered, \cite{JShenACCH}. They are based on a $L^2$-like
damping, but they can slow down the dynamics. This is
attribuable to the fact that the damping acts on all the components, including 
on the low mode components of the solution  which contains the main part of the information; the high mode
components represent a fluctuant part that can play a crucial role for
the numerical stability: indeed, the stability of a scheme
lies on its capability to control their expansion.\\

Independently, bi-grid methods have been extensively studied for
the solution of reaction-diffusion equation and also Navier-Stokes
\cite{AbbGirSay2,HeLiuSon}; they are based on the computation of
an approximation of the solution on the coarse space $W_H$ by
using an implicit scheme, the solution in the fine space $V_h$ is
then obtained by applying a simplified yet semi-implicit scheme. A
reduction of the CPU time is achieved, since the main
computational effort is concentred only on a small system.

The aim of this article is to propose a framework of bi-grid
methods in finite elements for the numerical approximation
of the Allen-Cahn equations.
The use of two grids, say of two FEM spaces $W_H$ and
$V_h$ with $\mbox{dim}(W_H)<< \mbox{dim}(V_h)$, allows to build a
scale separation (in space and in frequency) by decomposing the
solution $u_h \in V_h$ as
$$
u_h={\tilde u}_h+z_h.
$$
Here ${\tilde u}_h={\cal P}(u_H)$ is the $L^2$ prolongation in
$V_h$ of $u_H$ (the approximation $u_H$ of the solution in $W_H$)
is defined by
$$
(u_H-{\cal P}(u_H),\phi_h)=0, \forall \phi_h \in V_h,
$$
where $(.,.)$ denotes the scalar product in $L^2(\Omega)$. Hence
${\cal P}(u_H)\in V_h$  represents the mean part of the solution;
$z_h \in V_h$ is the fluctuant part and corresponds then to a
small correction which carries the high frequencies. In that way,
we can conjugate the bi-grid approach to reduce the CPU time  and
stabilize the correction step on the fine space by smoothing the
fluctuant component. We consider the case $W_H\subset V_h$, in a such case the method is hierarchical-like but the framework is still valid when $W_H\not \subset V_h$.\\

The article is organized as follows: at first, in Section 2, we
present briefly the phase fields problem (particularly the
Allen-Cahn equation) and we recall the classical time marching
schemes and their properties. After that, in Section 3, we
introduce the bi-grid framework as well as the separation of the
scales technique giving numerical illustrations. We then define
the bi-grid scheme with the reference scheme on the coarse space
and the correction scheme on the fine one. 
In section 4, we
present numerical results on the Allen-Cahn equation emphasizing on
the reduction of CPU time obtained by the new methods while the
solutions fit with the ones computed with the classical schemes;
we obtain particularly numerical evidences of the energy
diminishing property for the new schemes. Finally, in Section 5,
concluding remarks are given. All the numerical results were
obtained using the free Finite Element software FreeFem++
\cite{FreeFem}.
\section{Allen-Cahn equations, classical schemes. Advantages and limitations}
The Allen-Cahn equation writes as
\begin{eqnarray}
\label{AC1}
\Frac{\partial u}{\partial t} - \Delta u + \varepsilon^{-2}f(u)&=0,\  x\in \Omega , t\in (0,T),\\
\Frac{\partial u}{\partial \nu}=0 &\mbox{ on } \partial \Omega, t>0,\\
u(x_0)=u_0(x) & \mbox{ in } \Omega.
\end{eqnarray}
This equation can describe the separation of two phases, e.g; in a
metal alloy; $\epsilon>0$ represents the width of the interface
and $f=F'$ is the derivative of the potential $F$. This system can
be viewed also as a gradient flow for the energy
$E(u)=\ds\frac{1}{2}\ds\int_\Omega|\nabla
u|^2dx+\varepsilon^{-2}\ds\int_\Omega F(u)dx$ and can be rewritten
as
$$
\Frac{\partial u}{\partial t} +\nabla E(u)=0,
$$
in such a way that $\ds\frac{d E(u)}{dt}\le 0$, which guarantees
the stability of the system. This last property is very important
and has to be satisfied at the discrete level to have (energy)
stable time marching schemes; the maximum principle satisfied by
the solution is another stability property (in $L^{\infty}$ norm).
We refer to \cite{Bartels} for more details on the phase field modeling and on the mathematical properties of the solutions.\\

We now consider the time semi-discretization and focus on marching
schemes. Let $u^k\simeq u(x,k
\Delta t)$ be a  sequence of functions; $\Delta t$ is the time step. We recall the
following three classical schemes which will be used for
building our new methods. Note that a scheme is energy diminishing
when
$$
E(u^{k+1})\le E(u^k).
$$
\begin{itemize}
\item {\bf Scheme 1: Semi Implicit scheme}\\
\begin{eqnarray}
\label{AC_semi_implicit1}
\Frac{u^{k+1}-u^{k}}{\Delta t} -\Delta u^{k+1} +\Frac{1}{\varepsilon^2}f(u^k)&=0,& x\in \Omega,\\
\label{AC_semi_implicit2} \Frac{\partial u^{k+1}}{\partial \nu}=0,
&\mbox{ on } \partial \Omega.
\end{eqnarray}

This scheme is energy diminishing: it is easy to implement (only a
linear Neumann problem has to be solved at each step) but it
suffers from a hard restrictive time step condition
$$
0<\Delta t< \Frac{2 \varepsilon^2}{L},
$$
where $L=\|f'\|_{\infty}$, see e.g., \cite{JShenACCH}.
\item {\bf Scheme 2: Implicit Scheme}\\
\begin{eqnarray}
\label{AC_implicit1}
\Frac{u^{k+1}-u^{k}}{\Delta t} -\Delta u^{k+1} +\Frac{1}{\varepsilon^2}DF(u^k,u^{k+1})&=0,& x\in \Omega, \\
\label{AC_implicit2} \Frac{\partial u^{k+1}}{\partial \nu}=0,
&\mbox{ on } \partial \Omega,
\end{eqnarray}
where we have set $DF(u,v)=\left\{\begin{array}{ll}\Frac{F(u)-F(v)}{u-v} & \mbox{ if } u\neq v,\\
f(u) & \mbox{ if } u=v\end{array}\right.$.\\

This scheme is unconditionally energy stable, i.e. energy
diminishing for all $\Delta t >0$, see \cite{Elliott}. However it
necessitates to solve a fixed point problem at each step.
\item {\bf Scheme 3: Stabilized semi-implicit scheme}\\
A way to overcome the time step restriction while solving only one linear problem at each step is to
add a stabilization term as follows \cite{JShenACCH}:
\begin{eqnarray}
\label{AC_stabilized1}
\Frac{u^{k+1}-u^{k}}{\Delta t}+ \Frac{S}{\varepsilon^2}(u^{k+1}-u^k) -\Delta u^{k+1} +\Frac{1}{\varepsilon^2}f(u^k)&=0,& x\in \Omega,\\
\label{AC_stabilized2} \Frac{\partial u^{k+1}}{\partial \nu}=0,
&\mbox{ on } \partial \Omega.
\end{eqnarray}
The scheme is energy diminishing for all $\Delta t>0$ when $S\ge
\Frac{L}{2}$, see  \cite{JShenACCH}. As the semi-implicit scheme,
this method is easy to implement, however the stabilization slows
down the dynamics. One can explain it as follows: the damping term
$\Frac{S}{\varepsilon^2}(u^{k+1}-u^k)$ acts on all the modal
components of the solution, the low ones that represents the main
part of the solution and the high modes whose the limitation of
the propagation allows to obtain the stability.
\end{itemize}
An effective numerical solution of the problem needs to have a
fully discretized system, we consider here Finite Elements Method (FEM) for the
space discretization. To develop an efficient scheme we have to
take into account practical arguments, such as the implementation
as well as the cost in CPU time.
\begin{remark}
The above list of time schemes is not exhaustive, let us cite the
convex splitting scheme \cite{Eyre} which consist on decomposing
the potential as a difference of a convex and an expansive term as
$F(u)=F_c(u)-F_e(u)$. The scheme is then
$$
\Frac{u^{k+1}-u^{k}}{\Delta t} -\Delta u^{k+1} +\Frac{1}{\varepsilon^2}(\nabla F_c(u^{k+1})-\nabla F_e(u^{k}))=0.
$$
\end{remark}
\begin{remark}
An other important property of the solutions of Allen-Cahn's
equation is the maximum principle. For instance, when
$f(u)=u(1-u^2)$, it can be proved that if $|u_0(x)|\le 1,\forall x
\in \Omega $ then $|u(x,t)|\le 1,\forall x \in \Omega, \forall t
>0$: this is the $L^{\infty}$-stability.
\end{remark}
\section{Bi-grid framework}
\subsection{A scale separation in finite elements}
As stated in the introduction, the high frequency components
govern the stability of the scheme and the central idea of a
stabilized scheme is to stop or to slow down their expansion. Of
course, to this aim, we need to have a way to extract the high
mode part of the solution to stabilize, say writing $u$ as
\begin{eqnarray}\label{udecomp}
u={\tilde u}+z,
\end{eqnarray}
where ${\tilde u}$ is associated to the low mode part of $u$ and $z$,
of small size; $z$ contains the high frequencies of $u$. This
decomposition can be obtained by using several embedded
approximation spaces, as in the hierarchical methods and nonlinear
Galerkin methods
\cite{Bank,DuboisJauberteauTemam1,DuboisJauberteauTemam2,MarionXu}
and the references therein; however the embedding is not mandatory as shown hereafter. Let $V_h$ be the fine finite elements
space and let $W_H$ be another FEM space with $\mbox{dim}(W_H) <
\mbox{dim}(V_h)$. We define the prolongation operator ${\cal P}:
W_H\rightarrow V_h$ by
\begin{eqnarray}\label{uprolong}
(u_H-{\cal P} (u_H), \phi_h)=0, \forall \phi_h \in V_h.
\end{eqnarray}
Note that is not necessary to have $W_H\subset V_h$ which means that we
can avoid the building of a hierarchical basis and then the method can
be considered for many FEM spaces. However, it is important to
have compatibility conditions on $W_H$ and $V_h$ in such a way
the prolongation is uniquely defined. Let $(\phi_i)_{i=1}^N$
and $(\psi_j)_{j=1}^M$ be two bases of $V_h$ and of $W_H$
respectively ($M<N$). We define the matrix $B_H^h$ as
$(B_H^h)_{i,j}=(\phi_i,\psi_j), \ i=1,\cdots, N, \ \ j=1,\cdots,
M$. The prolongation step writes as
$$
M_h{\cal P} (u_H)=B_H^hu_H.
$$
Hence, since $M_h$ is a mass matrix,  ${\cal P}(u_H)$ is uniquely defined whenever the rank of $B_H^h$ is maximal, say equal to $M$. Of course this condition is automatically satisfied when $W_H\subset V_h$.\\
We give hereafter a sufficient condition for the injectivity of
$B_H^h$:
\begin{proposition}\label{graph_inf_sup}
Let $W_H$ and $V_h$ be two FEM spaces built on ${\cal C}^0$
reference elements. Assume that
$\forall u_H\in W_H, \left(  (u_H,\phi_h)=0,  \forall \phi_h \in
V_h \Rightarrow u_H=0\right)$
Then, $B_H^h$ is injective. Moreover, there exists two constants $\beta$ and  $\alpha_H^h>0$ such that  $0< \alpha_H^h < \beta \le 1$ and
$$
 \alpha_H^h \|u_H\| \le \|{\cal P}(u_H)\| \le \beta \| u_H\|,\forall u_H \in W_H.
$$
\end{proposition}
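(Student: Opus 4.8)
The plan is to route everything through the mass-matrix identity $M_h{\cal P}(u_H)=B_H^hu_H$ and to recognize ${\cal P}$ as the $L^2(\Omega)$-orthogonal projection onto $V_h$, after which the three assertions reduce to standard finite-dimensional arguments. I would first settle the injectivity of $B_H^h$. Reading off the entries, for the coefficient vector of $u_H\in W_H$ one has $(B_H^hu_H)_i=(\phi_i,u_H)$, so $B_H^hu_H=0$ is equivalent to $(\phi_i,u_H)=0$ for every basis function, hence to $(u_H,\phi_h)=0$ for all $\phi_h\in V_h$ by linearity. The hypothesis then forces $u_H=0$ as a function of $W_H$, and since $(\psi_j)$ is a basis its coefficient vector vanishes; thus $\ker B_H^h=\{0\}$ and $B_H^h$ is injective.

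Next I would obtain the upper bound. The defining relation $(u_H-{\cal P}(u_H),\phi_h)=0$ for all $\phi_h\in V_h$, together with ${\cal P}(u_H)\in V_h$, is exactly the characterization of the $L^2$-orthogonal projection of $u_H$ onto $V_h$. Consequently the Pythagorean identity
\[
\|u_H\|^2=\|{\cal P}(u_H)\|^2+\|u_H-{\cal P}(u_H)\|^2
\]
holds, which immediately gives $\|{\cal P}(u_H)\|\le\|u_H\|$; one may take $\beta=\sup_{u_H\neq 0}\|{\cal P}(u_H)\|/\|u_H\|\le 1$, and in fact $\beta=1$ as soon as some nonzero element of $W_H$ already lies in $V_h$ (in particular in the nested case $W_H\subset V_h$), while $\beta<1$ precisely when $W_H\cap V_h=\{0\}$.

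For the lower bound I would argue by compactness. The same kernel computation shows that ${\cal P}|_{W_H}$ is injective: if ${\cal P}(u_H)=0$ then $u_H\perp V_h$, so $u_H=0$ by hypothesis. Since $W_H$ is finite-dimensional, its unit sphere $S=\{u_H\in W_H:\|u_H\|=1\}$ is compact, and $u_H\mapsto\|{\cal P}(u_H)\|$ is continuous and strictly positive on $S$; hence it attains a minimum $\alpha_H^h>0$, and homogeneity yields $\|{\cal P}(u_H)\|\ge\alpha_H^h\|u_H\|$ for every $u_H\in W_H$, with $\alpha_H^h\le\beta$ by construction. Equivalently, writing $\|{\cal P}(u_H)\|^2=u_H^\top (B_H^h)^\top M_h^{-1}B_H^h u_H$ and $\|u_H\|^2=u_H^\top M_H u_H$, where $M_H$ is the mass matrix of $W_H$, the numbers $(\alpha_H^h)^2$ and $\beta^2$ are the smallest and largest generalized eigenvalues of the pencil $\bigl((B_H^h)^\top M_h^{-1}B_H^h,\,M_H\bigr)$, and positivity of the smallest one is exactly the injectivity of $B_H^h$ established above.

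I expect the only genuine obstacle to be the strict positivity of $\alpha_H^h$: the upper bound is a free consequence of ${\cal P}$ being a projection, whereas the uniform lower bound is what actually uses the hypothesis. The mechanism is that injectivity of ${\cal P}|_{W_H}$ rules out the value $0$ on the compact unit sphere, so finite dimensionality upgrades pointwise positivity to a uniform constant. The strict separation $\alpha_H^h<\beta$ is the mildest point and is not universal: in the genuinely non-hierarchical regime, where $\|{\cal P}(u_H)\|<\|u_H\|$ for the elements of $W_H$ not contained in $V_h$, it is strict, whereas in the nested case $W_H\subset V_h$ the map ${\cal P}$ is the identity on $W_H$ and one has $\alpha_H^h=\beta=1$.
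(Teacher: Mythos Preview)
Your proof is correct and follows essentially the same route as the paper: injectivity from the hypothesis, the lower bound by a compactness argument on the unit sphere of $W_H$, and the upper bound from the projection property. The paper derives $\beta\le 1$ from the identity $\|{\cal P}(u_H)\|^2=({\cal P}(u_H),u_H)$ together with Cauchy--Schwarz, whereas you use the equivalent Pythagorean decomposition; both hinge on the same orthogonality relation. Your closing remark is also well taken: the strict inequality $\alpha_H^h<\beta$ asserted in the statement is not proved in the paper and indeed fails in the nested case $W_H\subset V_h$, where ${\cal P}$ is the identity on $W_H$ and $\alpha_H^h=\beta=1$.
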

\begin{proof}
The assumption implies that if ${\cal P}(u_H)=0$, then
$(u_H,\phi_h)=0, \ \forall \phi_h \in V_h$, then $u_H=0$ which
gives the injectivity.

Now assuming that ${\cal P}(u_H)\neq 0$ and taking $\phi_h={\cal
P}(u_H)$ in (\ref{uprolong}), we find
$$
\|{\cal P}(u_H)\|^2=({\cal P}(u_H),u_H)=\Frac{({\cal P}(u_H),u_H)}{\|u_H\|\|{\cal P}(u_H)\|}\|u_H\|\|{\cal P}(u_H)\|.
$$
Let $\alpha_H^h= \inf_{u_H\in W_H}\Frac{({\cal
P}(u_H),u_H)}{\|u_H\|\|{\cal P}(u_H)\|}$. We now show that
$\alpha_H^h>0$. We can write
$$
\alpha_H^h= \inf_{u_H\in W_H, \|u_H\|=1}({\cal P}(u_H),u_H)\ge 0.
$$
The function $u_H\in W_H \mapsto ({\cal P}(u_H),u_H) \in\R^+$ is obviously continuous on the compact set ${\cal B}=\{u_H\in W_H, \|u_H\|=1\}$. Its minimum is reached and can not be equal to $0$ because $0$, the only root, is outside ${\cal B}$. Hence $\alpha_H^h>0$ and we infer from above that
$$
\|{\cal P}(u_H)\| \ge \alpha_H^h \|u_H\| \ge 0.
$$
Now, in the same way, we have
$$
\|{\cal P}(u_H)\|^2=({\cal P}(u_H),u_H)=\Frac{({\cal P}(u_H),u_H)}{\|u_H\|\|{\cal P}(u_H)\|}\|u_H\|\|{\cal P}(u_H)\|.
$$
But, as a consequence of the Cauchy-Schwarz inequality,
$$
\Frac{({\cal P}(u_H),u_H)}{\|u_H\|\|{\cal P}(u_H)\|}\le 1,
$$
so $\beta= \sup_{u_H\in W_H, \|u_H\|=1}({\cal P}(u_H),u_H)\le 1$.
Finally
$$
\alpha_H^h \|u_H\| \le \|{\cal P}(u_H)\| \le \beta \|u_H\|, \forall u_H \in W_H.
$$
\end{proof}
\begin{remark}
These conditions mean that the range of the angles between the
elements of $W_H$ and those of ${\cal P}(W_H)\subset V_h$ is in
the interval $[\arccos(\beta), \arccos{\alpha_H^h}]$; the
condition $\alpha_H^h >0$ avoids situations of orthogonality. Of
course $\alpha_H^h$ can depend on $H$ and $h$ and can become
smaller and smaller as $H$ and $h$ go to $0$. The best situation
being when $\alpha_H^h$ is independent of both $h$ and $H$.
\end{remark}
The correction $z$ is defined on the whole fine space $V_h$ and
not on a complementary space. Of course, one expect $z$ to be a
correction (i.e. small in norm) for regular functions $u$. More
precisely, we have the following result:
\begin{proposition}\label{z_estimates}
Let  $W_H$ and  $V_h$ be two FEM spaces that we assume to be of
class ${\cal C}^0$ and associated to nested regular triangulations
of $\Omega$, a regular bounded open set of $\R^n$; $(K,P,\Sigma)$
is the reference element. For  $u\in H^{k+1}(\Omega)$, we denote
by $u_h=\Pi_h u$ and $u_H=\Pi_H u$ the P-interpolate of $u$ in
$V_h$ and $V_H$ respectively. We assume that $P_{k}\subset P$. We
have the following estimate:
$$
\|u_h-{\cal P}u_H\|_{L^2(\Omega)} \le C H^{k+1} \|u \|_{H^{k+1}(\Omega)}.
$$
\end{proposition}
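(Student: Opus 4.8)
The plan is to exploit the nestedness of the triangulations to reduce the prolongation operator to the identity on the coarse space, after which the estimate follows from classical finite element interpolation theory. First I would observe that since the triangulations are nested and both spaces consist of ${\cal C}^0$ piecewise-polynomial functions with $P_k\subset P$, every function that is globally continuous and piecewise in $P$ on the coarse mesh remains globally continuous and piecewise in $P$ on the refined mesh (a polynomial of $P$ restricted to a subelement stays in $P$). Hence the coarse space $W_H=V_H$ satisfies $W_H\subset V_h$. This inclusion is the crucial structural fact on which the whole argument rests.

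With $W_H\subset V_h$ in hand, the key simplification is that ${\cal P}$ acts as the identity on $W_H$. Indeed, $u_H\in W_H\subset V_h$, and the defining relation $(u_H-{\cal P}(u_H),\phi_h)=0$ for all $\phi_h\in V_h$ characterizes ${\cal P}(u_H)$ as the $L^2$-orthogonal projection of $u_H$ onto $V_h$. Since $u_H$ already lies in $V_h$, this projection returns $u_H$ itself, so ${\cal P}(u_H)=u_H=\Pi_H u$. Consequently the quantity to be estimated collapses to
$$
\|u_h-{\cal P}(u_H)\|_{L^2(\Omega)}=\|\Pi_h u-\Pi_H u\|_{L^2(\Omega)},
$$
a difference of two interpolants of one and the same function $u$.

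Next I would insert $u$ and apply the triangle inequality,
$$
\|\Pi_h u-\Pi_H u\|_{L^2(\Omega)}\le \|u-\Pi_h u\|_{L^2(\Omega)}+\|u-\Pi_H u\|_{L^2(\Omega)},
$$
and control each term by the standard interpolation estimate on regular triangulations: for $u\in H^{k+1}(\Omega)$ and $P_k\subset P$ one has $\|u-\Pi_h u\|_{L^2(\Omega)}\le C h^{k+1}\|u\|_{H^{k+1}(\Omega)}$, and likewise with $H$ in place of $h$. Since $V_h$ is the fine space we have $h\le H$, hence $h^{k+1}\le H^{k+1}$, and both contributions are absorbed into $C H^{k+1}\|u\|_{H^{k+1}(\Omega)}$, which yields the claim.

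I expect the routine interpolation estimates of the last step to present no real difficulty; the one point that genuinely carries the argument is the reduction ${\cal P}(u_H)=u_H$, which hinges entirely on the inclusion $W_H\subset V_h$ guaranteed by the nestedness hypothesis. Were the triangulations not nested, ${\cal P}(u_H)$ would be a genuine projection and one would instead have to bound $\|{\cal P}(u_H)-u_H\|$ separately, invoking the stability bound $\|{\cal P}(u_H)\|\le \beta\|u_H\|$ of Proposition \ref{graph_inf_sup} together with an approximation property of $V_h$; in the nested case this complication disappears and the proof is essentially immediate.
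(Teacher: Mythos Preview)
Your proof is correct, but it takes a different route from the paper's. You exploit the nestedness hypothesis to conclude $W_H\subset V_h$ and hence ${\cal P}(u_H)=u_H$, after which the estimate is an immediate triangle-inequality consequence of the two interpolation bounds. The paper instead never invokes $W_H\subset V_h$: it writes $u_h-{\cal P}(u_H)=(u_h-u)+(u-u_H)+(u_H-{\cal P}(u_H))$, uses the defining orthogonality $(u_H-{\cal P}(u_H),\phi_h)=0$ to drop the last term when paired against any $\phi_h\in V_h$, and then tests with $\phi_h=u_h-{\cal P}(u_H)$ and applies Cauchy--Schwarz to obtain $\|u_h-{\cal P}(u_H)\|\le\|u_h-u\|+\|u-u_H\|$. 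Both arguments arrive at the same inequality and finish identically.

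Your approach is the more elementary one under the stated hypotheses; the paper's argument buys generality: it uses only the $L^2$-projection property of ${\cal P}$ and therefore goes through unchanged when $W_H\not\subset V_h$, which is precisely the situation the authors wish to allow (cf.\ their remarks surrounding Proposition~\ref{graph_inf_sup} and the discussion in Section~4.2). In that sense the nestedness assumption in the statement is not actually used in the paper's proof, whereas it is essential to yours. Your closing paragraph anticipates this, but the separate bound on $\|{\cal P}(u_H)-u_H\|$ you sketch for the non-nested case is unnecessary: the orthogonality trick handles it directly.
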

\begin{proof}
We start from the classical interpolation error estimates in FE
spaces \cite{ErnGuermond},
$$
\|u-\Pi_hu\|_{L^2(\Omega)}\le C h^{k+1} \|u \|_{H^{k+1}(\Omega)}
 \mbox{ and } \|u-\Pi_Hu\|_{L^2(\Omega)}\le C H^{k+1}\|u \|_{H^{k+1}(\Omega)}.
$$
We can write
$$
u_h-{\cal P}(u_H)=u_h-u+u-u_H+u_H-{\cal P}(u_H),
$$
hence
$$
(u_h-u+u-u_H+u_H-{\cal P}(u_H),\phi_h)=(u_h-u+u-u_H,\phi_h), \forall \phi_h \in V_h.
$$
Taking $\phi_h=u_h-{\cal P}(u_H) \in V_h$, we find,
$$
\|u_h-{\cal P}(u_H)\|^2_{L^2(\Omega)}\le \|u_h-u+u-u_H\|_{L^2(\Omega)}.\|u_h-{\cal P}(u_H)\|_{L^2(\Omega)},
$$
then
$$
\|u_h-{\cal P}(u_H)\|_{L^2(\Omega)}\le  \|u_h-u+u-u_H\|_{L^2(\Omega)}\le  \|u_h-u\|_{L^2(\Omega)}+ \|u_H-u\|_{L^2(\Omega)}.
$$
Finally
$$
\|u_h-{\cal P}(u_H)\|_{L^2(\Omega)}\le C' H^{k+1} \|u
\|_{H^{k+1}(\Omega)},
$$
where $C'>0$ is independent of $h, H$ and $u$.
\end{proof}
\begin{remark}
For $0\le m\le k+1$, defining ${\cal P}$ as $({\cal P}(u_H)-u_H, \phi_h)_m=0, \forall \phi_h \in V_h$, where $(.,.)_m$ is the standard scalar product in $H^m$, then, proceeding as above, we can prove that
$$
\|u_h-{\cal P}(u_H)\|_{H^{m}(\Omega)} \le C H^{k+1-m} \|u \|_{H^{k+1}(\Omega)}.
$$
\end{remark}
\subsection{Illustration}
For given functions and finite elements spaces $P_1$ and $P_2$, we
build the decomposition (\ref{udecomp})-(\ref{uprolong}). We
generate the meshes with FreeFem++ \cite{FreeFem} using Delaunay's
triangulation starting from the boundary. We have taken $Nf=100$
boundary points for generating ${\cal T}_h$, the mesh associated
to the fine space $V_h$ and $Nc=50$ boundary points for generating
${\cal T}_H$, the mesh associated to the coarse space $W_H$. The
first example is the decomposition of the function
$u(x,y)=(x^2+y^2-1)(x^2+y^2-4)$ on the torus; the second one is
with $u(x,y)=\sin(72x(1-x)y(1-y))$ on the unit square. Below, in
Figure \ref{exemple1zh} and  Figure \ref{exemple2zh}, we observe
that the $z$ component are much smaller in magnitude than those of
the original function ($10\%$ for $P_1$ and $0.5\%$ for $P_2$
elements).
\begin{figure}[!htp]
\begin{center}
\includegraphics[width=7.5cm, height=7.5cm]{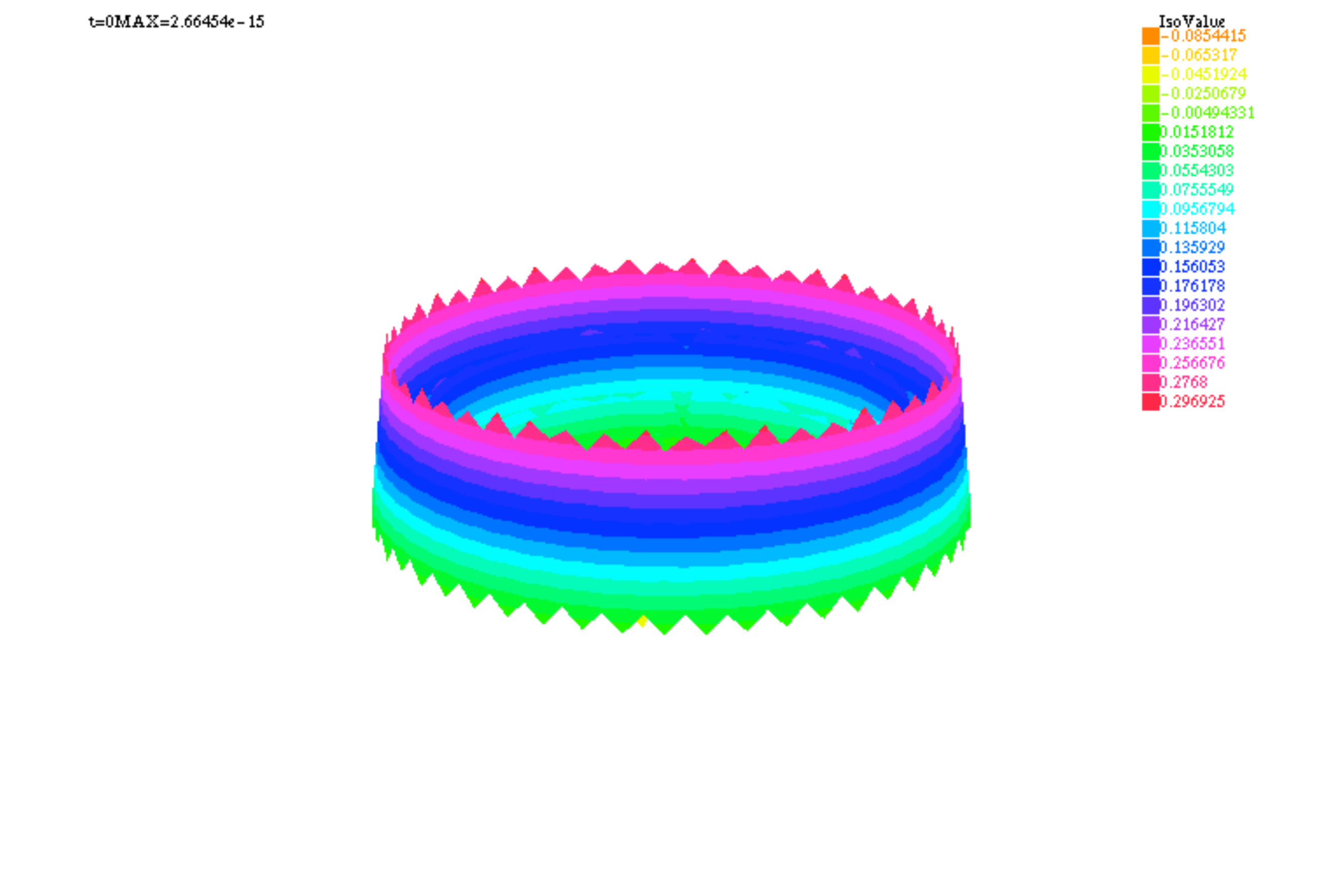}
\includegraphics[width=7.5cm, height=7.5cm]{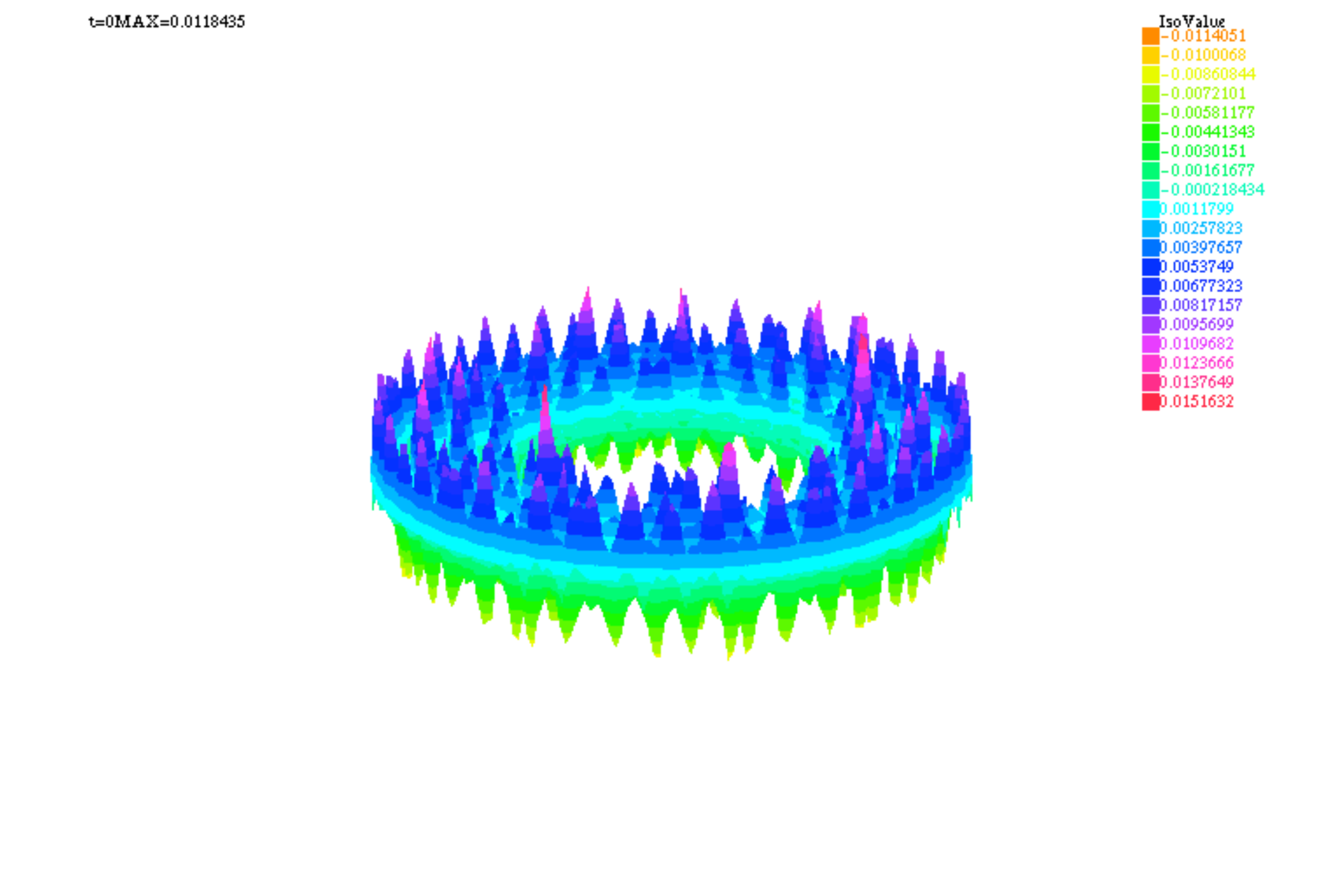}\\
\includegraphics[width=7.5cm, height=7.5cm]{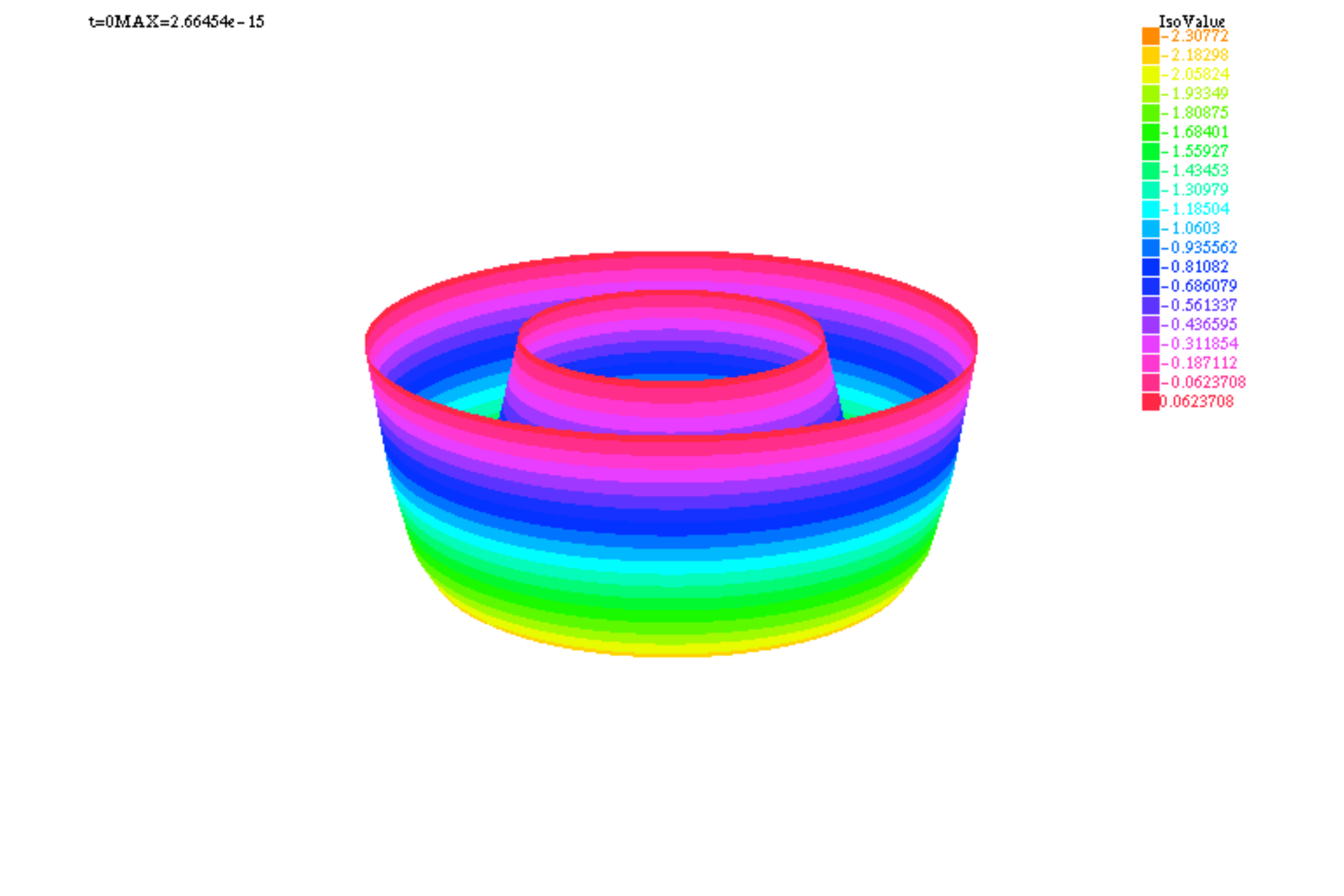}
\includegraphics[width=7.5cm, height=7.5cm]{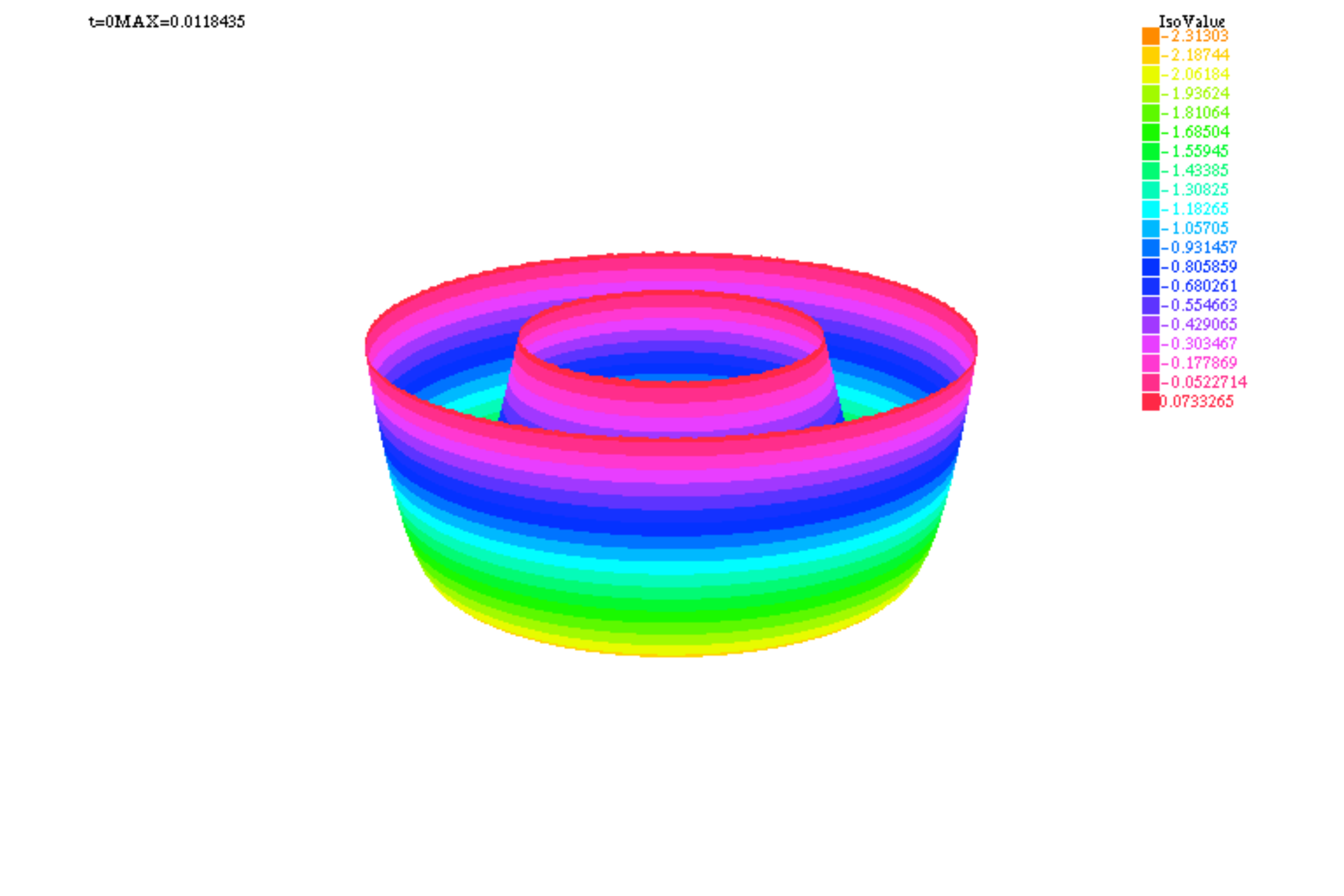}\\
\caption{Correction components for
$u(x,y)=(x^2+y^2-1)(x^2+y^2-4)$, 3d output and iso-values, $P_1$
elements} \label{exemple1zh}
\end{center}
\end{figure}
\begin{figure}[!htp]
\vskip -1.5cm
\begin{center}
\includegraphics[width=7.5cm, height=7.5cm]{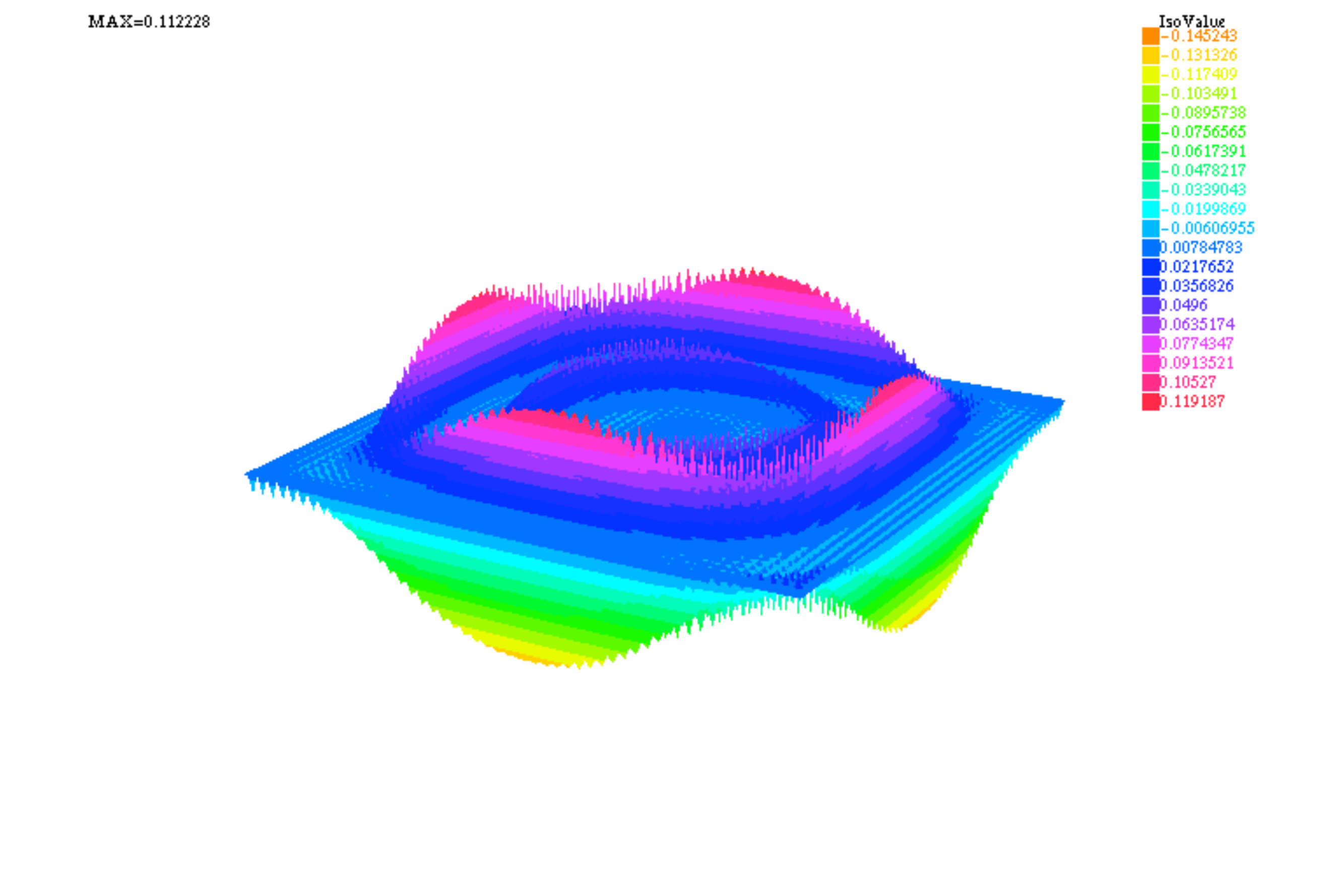}
\includegraphics[width=7.5cm, height=7.5cm]{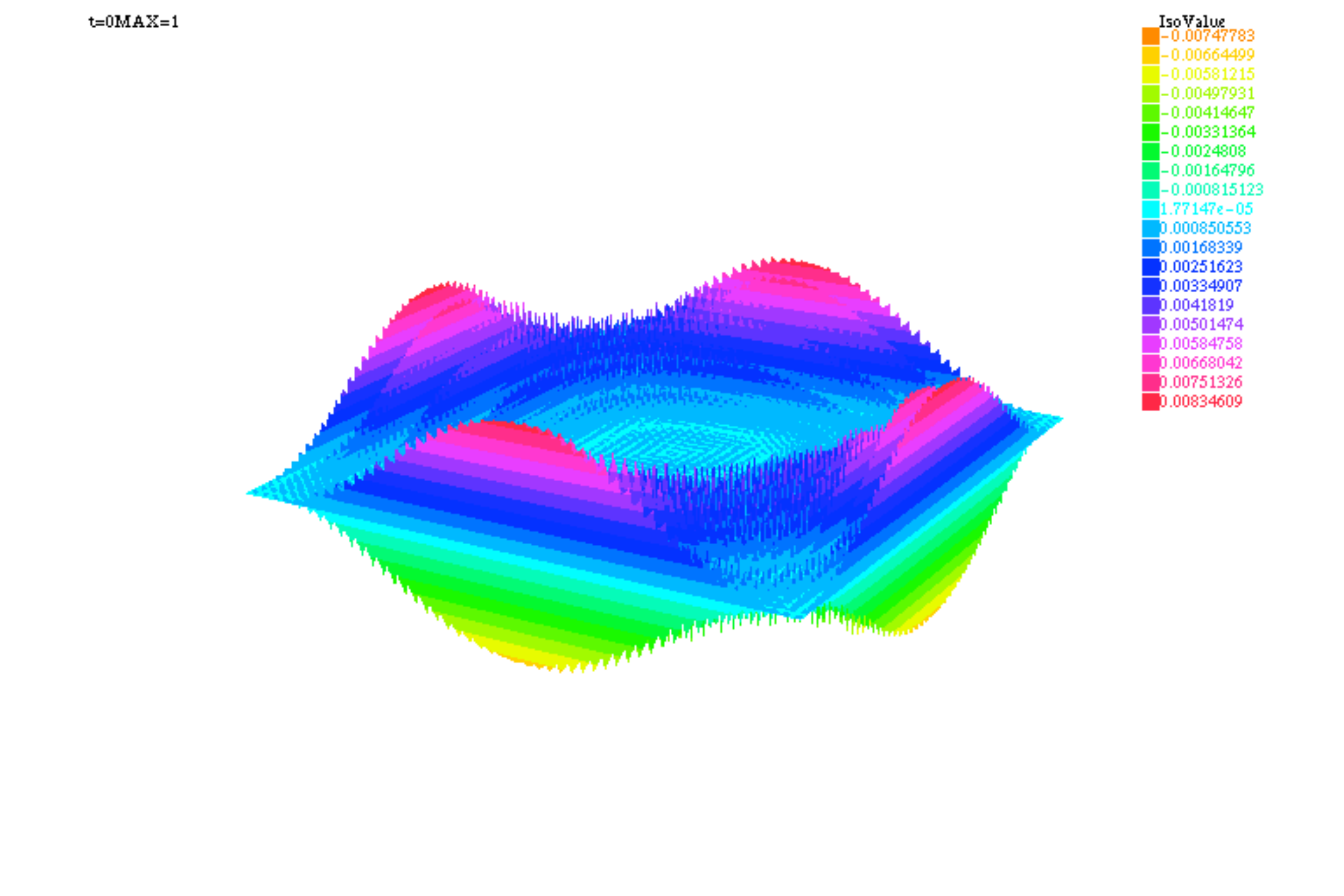}\\
\includegraphics[width=7.5cm, height=7.5cm]{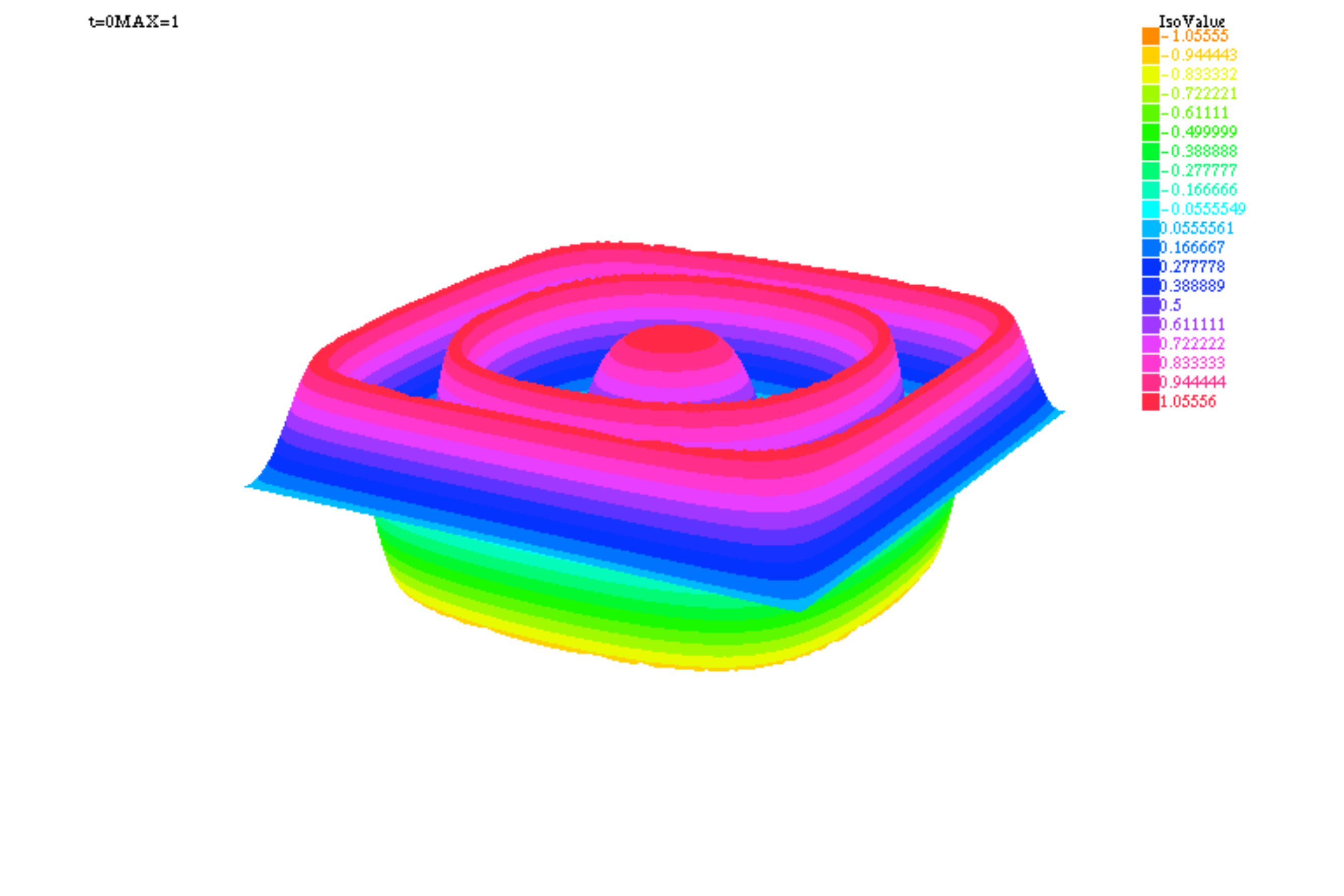}
\includegraphics[width=7.5cm, height=7.5cm]{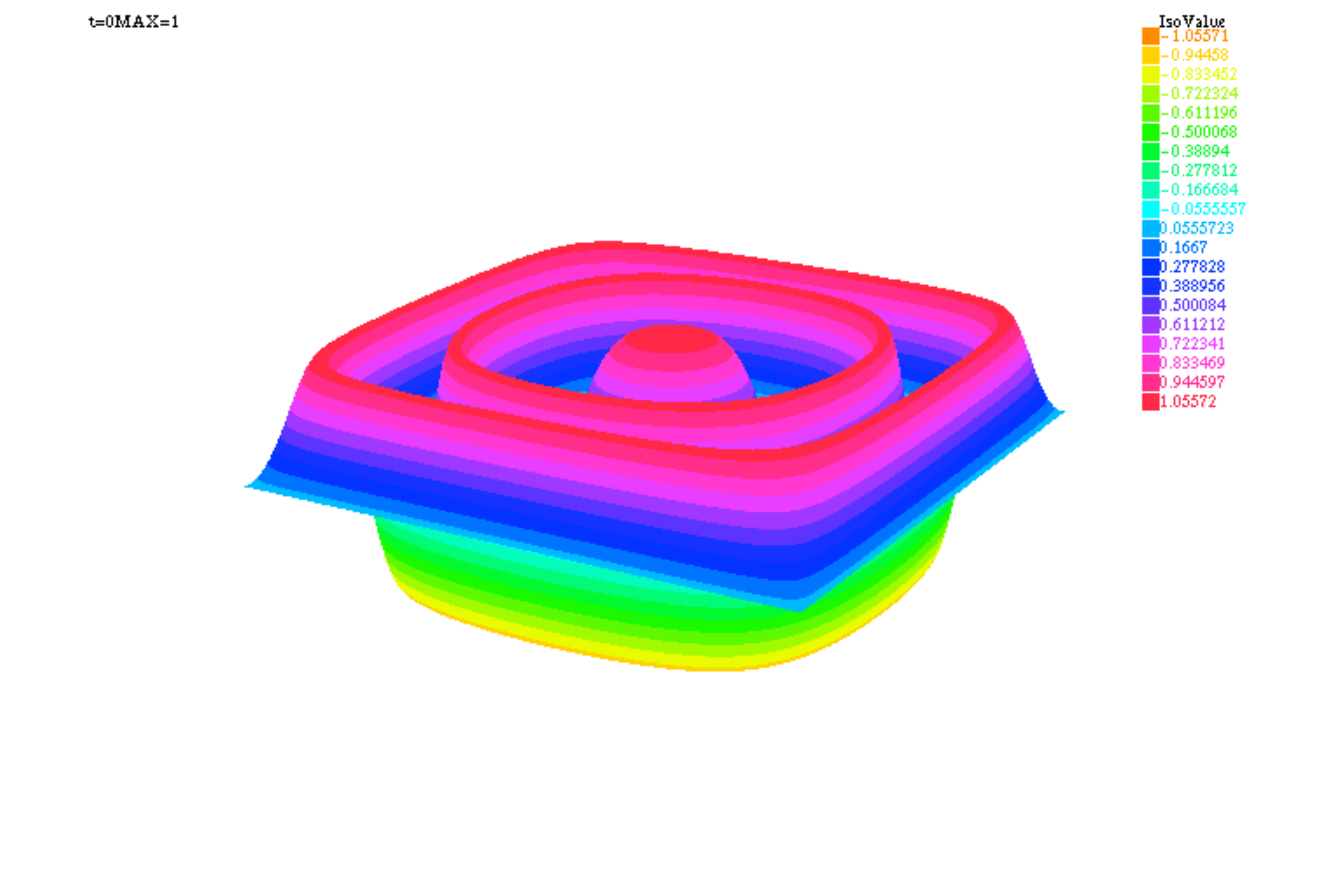}\\
\caption{Correction components for $u(x,y)=\sin(72x(1-x)y(1-y))$,
3d output and isovalues, $P_1$ elements} \label{exemple2zh}
\end{center}
\end{figure}
\begin{remark}
The same can be done with Neumann Boundary Conditions (B.C.) that
are the usual B.C. for the Allen-Cahn equation on which we will
concentrate now.
\end{remark}
In the previous figures we have illustrated the effect of the scale separation in space: the fluctuant part of the function, $z$,
is small in amplitude as respected to the one of the original function. This is observed when considering $P_1$ as well as $P_2$ elements. We observe also that the $z$-component exhibit high oscillations that are characteristic to the high frequency part of a function. We now quantify this property.\\
We generate the approximations of the
eigenfunction in the FEM space by solving numerically eigenvalues
problem,
\begin{eqnarray}
\int_{\Omega}u_hv_hdx + \int_{\Omega}\nabla u_h\nabla
v_hdx=\lambda \int_{\Omega}u_hv_hdx & \forall v_h \in V_h,
\end{eqnarray}
which is equivalent to find the eigen-elements of
\begin{eqnarray}
A_h u=\lambda M_h u,
\end{eqnarray}
where $A_h$ and $M_h$ are respectively the stiffness and the mass
matrix on the FEM space $V_h$. Denote by $(w^{(i)},\lambda_i)$ the
eigenvectors associated to the eigenvalue $\lambda_i$, we compare
the first eigen-components of $U$ and those of $Z$ to point out,
that is
$$
 \int_{\Omega}Uw^{(i)}dx \mbox{ and }   \int_{\Omega}Zw^{(i)}dx.
$$
In Figures \ref{spect_P2} and \ref{spect_P1}, we have represented
the energy spectrum of $u(x,y)=\cos(44(1-x)xy(1-y))$ and of its
fluctuant component $z$, when using $P_2$ as well as $P_1$
elements. We observe that the low mode components of $z$ are
reduced
in an important way as respected to the ones of $u$ while the high modes components are less smoothed; hence $z$ carries the high frequencies.\\
\begin{figure}[h!]
\vskip -0.8cm
\begin{center}
\includegraphics[width=14.cm, height=6.2cm]{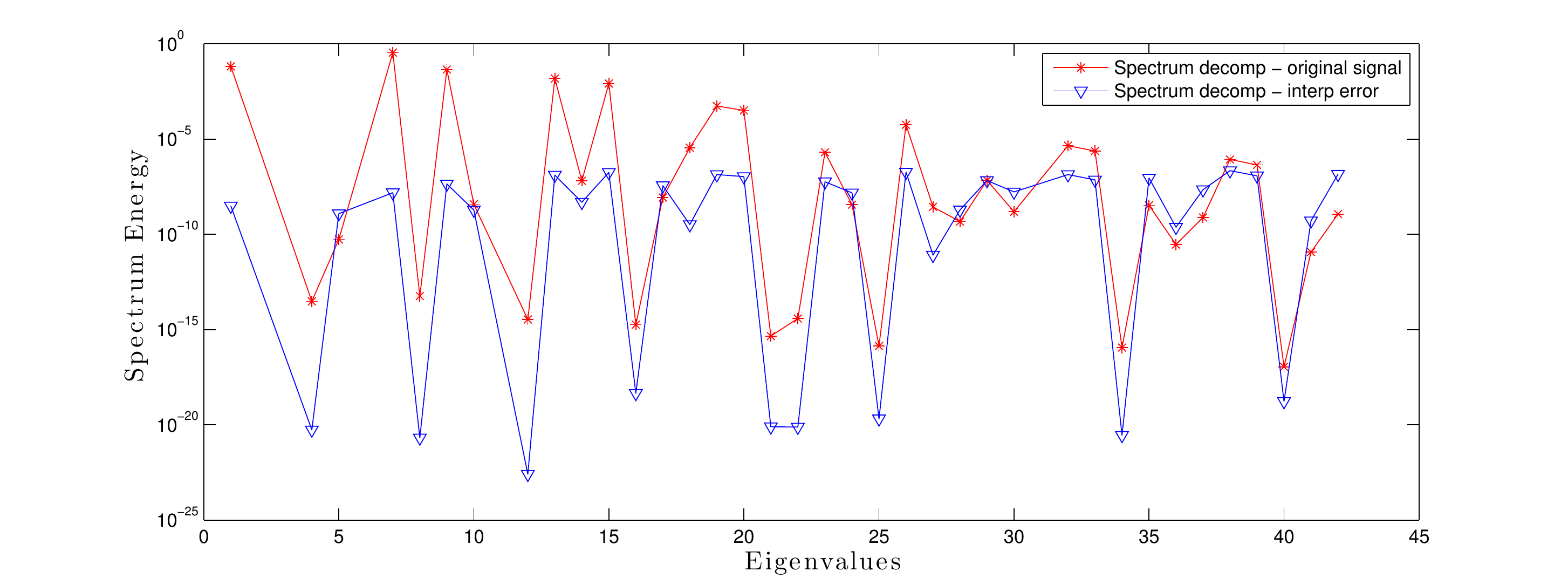}
\end{center}
\caption{ $P2$ FEM, 
$u(x,y)=\cos(44(1-x)xy(1-y))$, $\mbox{dim}(V_h)=1681, \  \mbox{dim}(W_H)=441$} \label{En_spect1}
\label{spect_P2}
\end{figure}
\newpage
\begin{figure}[h!]
\begin{center}
\includegraphics[width=14.cm, height=6.2cm]{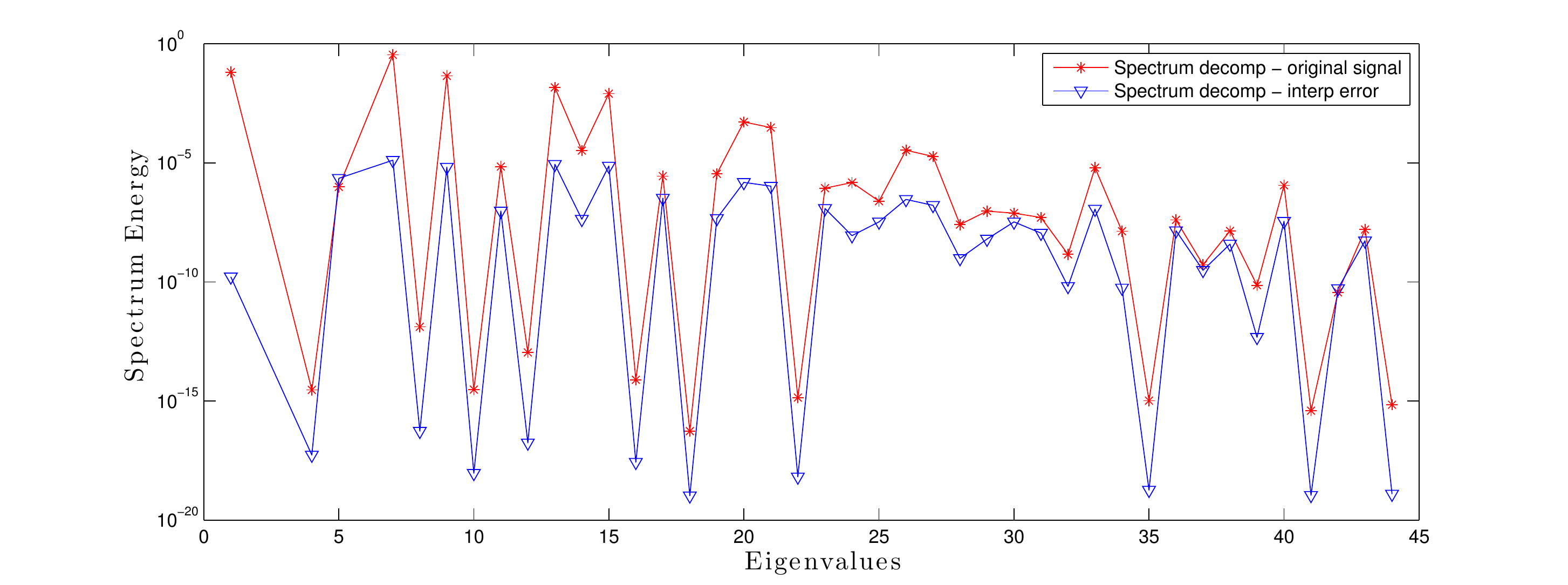}
\end{center}
\caption{ $P1$ FEM, 
$u(x,y)=\cos(44(1-x)xy(1-y))$, $\mbox{dim}(V_h)=1681, \  \mbox{dim}(W_H)=441$}\label{En_spect2}
\label{spect_P1}
\end{figure}
\subsection{The bi-grid framework}
\begin{center}
\begin{minipage}[H]{12cm}
  \begin{algorithm}[H]
    \caption{Scheme 4: Bi-grid scheme }\label{Gen_bi_grid}
    \begin{algorithmic}[1]
     \For{$k=0,1, \cdots$}\\
           \State {\bf Solve in $W_H$ }($\ds\frac{u_H^{k+1}-u_H^k}{\Delta t},\psi_H) + (\nabla
u_H^{n+1},\nabla \psi_H)+(DF(u^{k+1}_H, u^{k}_H),\psi_H) =0, \forall \psi_H \in V_H$
\State {\bf Solve in $V_h$ } $(\tilde{u}_h^{k+1},\phi_h)-(u_H^{k+1},\phi_h)=0, \forall \phi_h \in V_h$
             \State {\bf Solve}
             $$
             \begin{array}{ll}
\medskip
(1+\tau \Delta t)(z_h^{k+1}-z_h^k,\phi_h)
+\Delta t (\nabla z_h^{k+1},\nabla \phi_h)
+\Delta t (\nabla \tilde{u}_h^{k+1},\nabla \phi_h)\\
\medskip
+(\tilde{u}_h^{k+1}-\tilde{u}_h^k,\phi_h) +\Delta t ({\tilde
f}(u_h^k,{\tilde u}_h^{k+1},z_h^{k+1}),\phi_h)=0,\forall
\phi_h \in V_h\\
\end{array}
$$
\State {\bf Set}   $u^{k+1}= \tilde{u}_h^{k+1}+z_h^{k+1}$
            \EndFor
    \end{algorithmic}
    \end{algorithm}
\end{minipage}
\end{center}
Here ${\tilde f}(u^k,{\tilde u}^{k+1}_h,z_h^{k+1})$ is an approximation of $f(u^{k+1}_h)$, e.g. ${\tilde f}(u^k,{\tilde u}^{k+1}_h,z_h^{k+1})=f(u^{k}_h)$.
For this choice of ${\tilde f}$ we have an important property:
the high mode stabilization makes the bi-grid scheme consistent with the computation of the steady states. More precisely, we have
\begin{proposition}
Assume, for fixed $h$ and $H$, that there exists a unique pair of
elements ${\bar u}_H\in W_H$ and ${\bar u}_h\in V_h$ such that
$$
(\nabla {\bar u}_H,\nabla \psi_H)+(f({\bar u}_H),\psi_H)=0,
\forall \psi_H \in W_H \mbox{ and } (\nabla {\bar u}_h,\nabla
\phi_h)+(f({\bar u}_h),\phi_h)=0, \forall \phi_h \in V_h.
$$
Assume that $\lim_{k\rightarrow +\infty} u_H^k={\bar u}_H$  and
that $z^k_h$ is convergent to ${\bar z}_h$. Then
$$
 \mbox{ and }
\lim_{k\rightarrow +\infty} u_h^k={\bar u}_h.
$$
\end{proposition}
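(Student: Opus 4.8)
The plan is to pass to the limit $k\to+\infty$ in each of the three solve-steps of Scheme \ref{Gen_bi_grid} and to show that the limit of $u_h^k$ satisfies exactly the fine steady-state equation, so that uniqueness of $\bar u_h$ forces the identification. Since $W_H$ and $V_h$ are finite dimensional, convergence of the coefficient vectors (equivalently, in any norm) lets us pass to the limit freely in every scalar product, and the polynomial nonlinearity $f$ is continuous; likewise $DF$ is continuous on the diagonal since $DF(u,u)=f(u)$, so the nonlinear terms converge to the nonlinearity evaluated at the limit.

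First I would treat the prolongation step. As ${\cal P}$ is the fixed linear operator defined by $M_h{\cal P}(u_H)=B_H^h u_H$, the hypothesis $u_H^k\to\bar u_H$ gives $\tilde u_h^k={\cal P}(u_H^k)\to {\cal P}(\bar u_H)=:\bar{\tilde u}_h$, characterised by $(\bar{\tilde u}_h,\phi_h)=(\bar u_H,\phi_h)$ for all $\phi_h\in V_h$. Combined with the assumption $z_h^k\to\bar z_h$, the reconstruction step yields $u_h^k=\tilde u_h^k+z_h^k\to \bar{\tilde u}_h+\bar z_h=:\bar u\in V_h$; the whole point is then to prove $\bar u=\bar u_h$.

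The crux is the correction step. Passing to the limit there, the two increment terms vanish: $z_h^{k+1}-z_h^k\to0$ (so the stabilization contribution $(1+\tau\Delta t)(z_h^{k+1}-z_h^k,\phi_h)\to0$) and $\tilde u_h^{k+1}-\tilde u_h^k\to0$. With $\tilde f(u_h^k,\tilde u_h^{k+1},z_h^{k+1})=f(u_h^k)\to f(\bar u)$, the remaining three terms give, for every $\phi_h\in V_h$,
\[
\Delta t\,(\nabla\bar z_h,\nabla\phi_h)+\Delta t\,(\nabla\bar{\tilde u}_h,\nabla\phi_h)+\Delta t\,(f(\bar u),\phi_h)=0.
\]
Dividing by $\Delta t>0$ and using linearity of the gradient together with $\bar u=\bar{\tilde u}_h+\bar z_h$, this collapses to $(\nabla\bar u,\nabla\phi_h)+(f(\bar u),\phi_h)=0$ for all $\phi_h\in V_h$, which is precisely the equation defining $\bar u_h$. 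By the assumed uniqueness of its solution, $\bar u=\bar u_h$, hence $\lim_{k\to+\infty} u_h^k=\bar u_h$.

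The step I expect to be the only real subtlety is the vanishing of the increment terms and the exact recombination of the two gradient contributions into $\nabla\bar u$: this is exactly the mechanism by which the high-mode stabilization $\tau\Delta t\,(z_h^{k+1}-z_h^k)$ leaves the steady state untouched, so the scheme stays consistent. It relies on $z_h$ living on the whole space $V_h$ (not on a genuine complement of ${\cal P}(W_H)$), so that $\bar{\tilde u}_h+\bar z_h$ is an arbitrary element of $V_h$ and the recombination is exact. The coarse solve-step need only be used through the given limit $u_H^k\to\bar u_H$; one may optionally check, for consistency, that this limit satisfies the coarse steady equation by the same argument, its increment term vanishing and $DF(u_H^{k+1},u_H^k)\to f(\bar u_H)$.
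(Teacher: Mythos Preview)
Your proof is correct and follows essentially the same route as the paper: use continuity of the prolongation to pass to the limit in $\tilde u_h^k$, let $k\to+\infty$ in the correction step so that the increment (and hence the stabilization) terms vanish, recombine the two gradient contributions into $\nabla(\bar{\tilde u}_h+\bar z_h)$, and invoke uniqueness of the fine steady state. Your write-up is in fact more explicit than the paper's about why the limits can be taken (finite dimensionality, continuity of $f$) and about the role of the choice $\tilde f=f(u_h^k)$, but the argument is the same.
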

\begin{proof}
To establish the consistency, we show that $u^k_h=z^k_h+{\tilde u}^k_h$ converges to ${\bar u}_h$.\\
By continuity of the prolongation ${\cal P}$, we have
$$
{\cal P}(u^n_H)={\tilde u}_h^k\rightarrow  {\cal P}({\bar u}_H)={\tilde {\bar u}}_h, \mbox{ as } k\rightarrow +\infty .
$$
Taking the limit in the correction step of the scheme, we get,
after the usual simplifications
$$
 (\nabla z_h,\nabla \phi_h)+(\nabla \tilde{u}_h,\nabla \phi_h)
+(f(z_h+{\tilde u}_h),\phi_h)=0,\forall
\phi_h \in V_h.
$$
Letting $w_h=z_h+{\tilde u}_h$, we find
$$
(\nabla w_h,\nabla \phi_h)+(f(w_h),\phi_h)=0, \forall
\phi_h \in V_h.
$$
By identification, $w_h=z_h+{\tilde u}_h={\bar u}_h$.
\end{proof}
\section{A big-grid method for Allen-Cahn Equation}
As presented above, the bi-grid scheme is based on an implicit
(stable) scheme applied on the coarse space $W_H$ and on a
simplified semi-implicit scheme on the fine space $V_h$, for the
computation of the correction (fluctuant) term $z$. The scheme on
$W_H$ is considered to be the reference scheme. Its implementation
necessitates the numerical solution of a fixed point problem at
each time step. We present hereafter a way to overcome the
artificial instability carried by the use of the classical Picard
iterates. The new nonlinear iterations will be implemented to
define the effective reference scheme when applied to $V_h$ and to
which we will compare the bi-grid schemes.
\subsection{A solution to an artificial instability problem for a one-grid scheme}
\noindent The implementation of the scheme (\ref{AC_implicit1})
needs a fixed point problem to be solved at each time step. Let
$M_h$ and $A_h$ be the mass and the stiffness matrices
respectively. If we set
$$\phi(v,u^k)=\Big(M_h+\Delta t A_h\Big)^{-1}\Big\{u^k-\ds\frac{\Delta t}{\epsilon^2}DF(u^k,v)\Big\} ,$$\\
\noindent then the time marching scheme reduces to solve the fixed
point problem
\begin{equation}\label{pointfixe}
v=\phi(v,u^k)
\end{equation}
at the n$^{th}$ time step. In practice, the convergence of the
Picard iterates is obtained by taking only very small values of
$\Delta t$, typically $\Delta t\simeq 10^{-4}$. This is dramatic
since we are looking to the long time numerical behavior of the
solution. Anyway, this effective restriction on the time step is
really artificial because the scheme is supposed to be
unconditionally stable. For this reason, as in \cite{AACDG} (in
the Nonlinear Schr\"{o}dinger Equation case), we apply here the
extrapolation of the fixed point to compute $u^{k+1}$ from $u^k$
and we propose to solve (\ref{pointfixe}) by accelerating the
(Picard) sequence
\begin{equation}\label{PicardSeq}
\left.
\begin{array}{ll}
\medskip
v^0=u^n,\\
\medskip
\text{for}\;\; m=0,\ldots\\
\medskip
v^{(m+1)}=\phi(v^{(m)},u^k),
\end{array}
\right.
\end{equation}
\noindent enhancing in that the stability region, allowing then to
take larger values of $\Delta t$. To this end, we use the
$\Delta^\kappa$ acceleration procedure, see \cite{BrezJP}.
In two words, the $\Delta^\kappa$ procedure
consists in replacing the Picard iterates by
\begin{equation}\label{MarechalSeq}
\left.
\begin{array}{ll}
\medskip
v^0=u^k,\\
\medskip
\text{for}\;\; m=0,\ldots\\
\medskip
v^{(m+1)}=v^{(m)}-(-1)^\kappa\alpha_m^\kappa\Delta_\phi^k v^{(m)};
\end{array}
\right.
\end{equation}
\noindent where $\Delta_\phi^\kappa v^{(m)}=\ds\sum_{j=0}^\kappa
C_j^\kappa(-1)^{\kappa-j}\phi^{(j)}(v^{(m)},u^k),\;C_j^\kappa=\ds\frac{\kappa!}{j!(\kappa-j)!}$
is the binomial c$\oe$fficient and $\phi^{(j)}$ denotes the
$j^{th}$ composition of $\phi$ with itself. We have
\begin{equation}\label{alpha}
\alpha_m^\kappa=(-1)^\kappa\ds\frac{<\Delta_\phi^1
v^{(m)},\Delta_\phi^{\kappa+1}v^{(m)}>}{<\Delta_\phi^{\kappa+1}
v^{(m)},\Delta_\phi^{\kappa+1} v^{(m)}>},
\end{equation}
\vspace{-0.1cm} \noindent where $<.,.>$ denotes the euclidean
scalar product in $\R^n$, see \cite{BrezJP}. These acceleration
procedures have been applied with the $\Delta^1$ (Lemar\'echal's method\cite{Lemarechal} corresponding to
$\kappa=1$); we can take $\Delta t=10^{-2}$ and the scheme
(\ref{AC_implicit1}) is still stable.
 A comparison of the energy curves shows a digital
convergence by varying the number of nodes on the edge of the unit
square $N$  to generate the mesh ${\cal T}_h$, for $\epsilon=0.03$ and Lemar\'echal's acceleration.
%
\vspace{-0.5cm}
\begin{center}
\begin{figure}[!hp]
\begin{center}
\includegraphics[width=7.2cm,height=4.3cm]{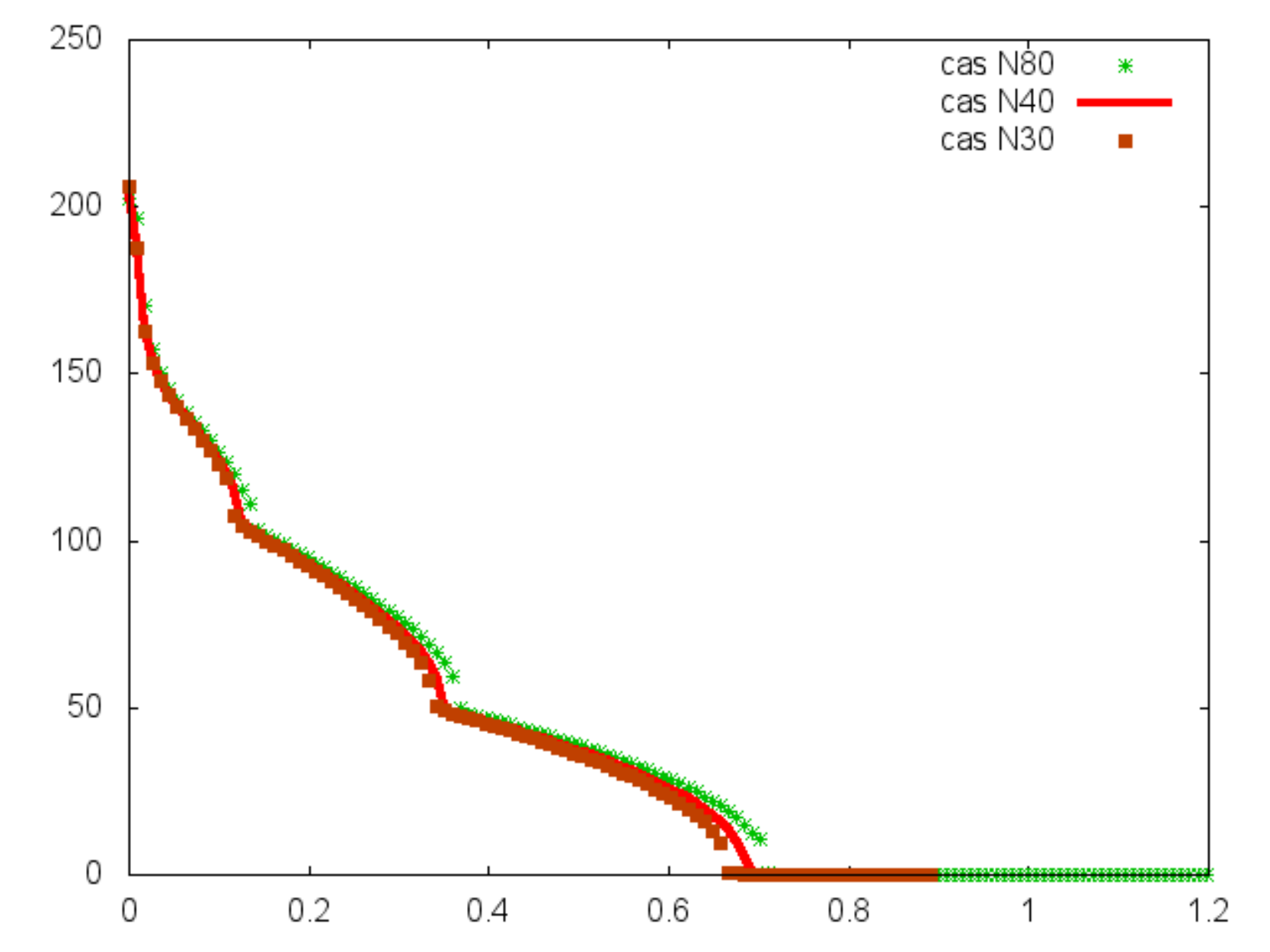}
\end{center}
\vspace{-0.2cm}
\caption{Energy curves for $N=30 \ (\mbox{dim}(V_h)=961),N=40  \ (\mbox{dim}(V_h)=1681), N=80 \ (\mbox{dim}(V_h)=6561)$ on the unit square with
$P_1$ element, $\Delta t=0.009$ and Lemar\'echal method.}
\end{figure}
\end{center}
\newpage
\begin{center}
\begin{figure}[!hp]
\begin{center}
\includegraphics[width=7.2cm,height=4.3cm]{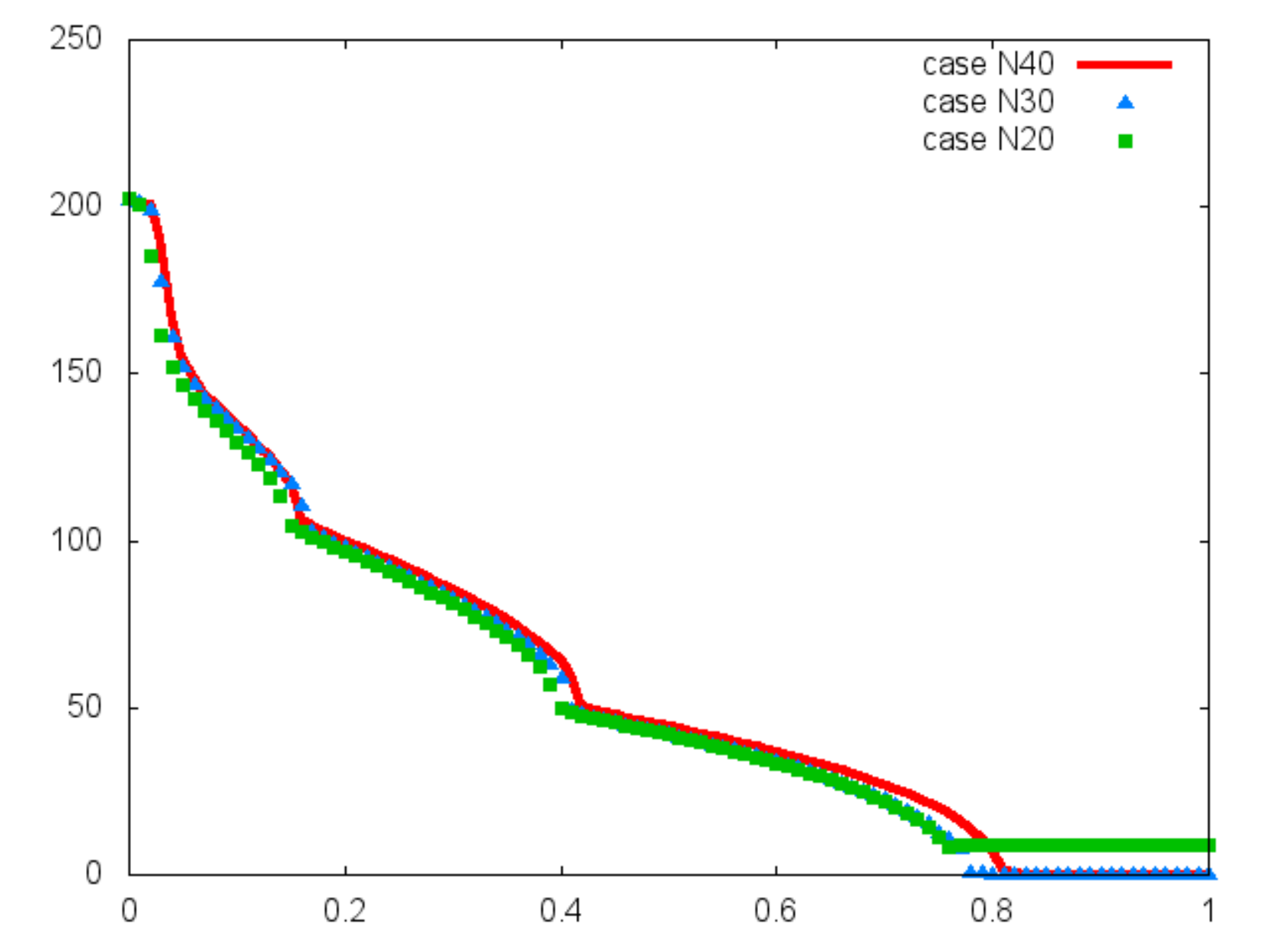}
\end{center}
\vspace{-0.2cm}
\caption{Energy curves for $N=20 \ (\mbox{dim}(V_h)=1681),N=30  \
(\mbox{dim}(V_h)=3721), N=40 \ (\mbox{dim}(V_h)=6561)$ on the unit
square with $P_2$ element, $\Delta t=10^{-2}$ and Lemar\'echal
method.}
\end{figure}
\end{center}
%
\vspace{-0.7cm}
%
 The efficiency of this method appears in reducing the
number of internal iterations required to reach a final time $T$. We
present the CPU computation time and the number of internal iterations
following the numerical implementation done on the unit square for
$T=0.4$ and the maximum time step respectively for Lemar\'echal
and Picard method.
%
$$
\begin{tabular}{|l|l|l|l|l|l|}
\hline
Case P2&Picard & $\Delta t=3\times10^{-4}$ & T=0.4 & Nb iter=34266 & CPU=2851.62s\\
\cline{2-6}
N=20&Lemarechal &$\Delta t=5\times10^{-2}$ & T=0.4& Nb iter=360 & CPU=73.595s\\
\hline
Case P2&Picard & $\Delta t=3\times10^{-4}$ & T=0.4 & Nb iter=34796 & CPU=11639.8s\\
\cline{2-6}
N=40&Lemarechal &$\Delta t=10^{-2}$ & T=0.4& Nb iter=1600 & CPU=1598.8s\\
\hline
%
%
Case P2&Picard & $\Delta t=3\times10^{-4}$ & T=0.4 & Nb iter=35142 & CPU=47686.2s\\
\cline{2-6}
N=80&Lemarechal &$\Delta t=10^{-2}$ & T=0.4& Nb iter=1599 & CPU=11250.5s\\
%
\hline
\end{tabular}
$$

\subsubsection{Numerical results}
\vspace{-0.3cm} \noindent In order to compare the two fixed point
methods, we present below the evolution curve of the functional
energy over time and some numerical results for the
unconditionally stable scheme (\ref{AC_implicit1}) by using the
mesh of the unit square with $N= 40$, the initial condition
$u_{0}=\cos(4\pi x) \cos(4\pi y)$, the interfacial width
$\epsilon=0.03$, $ \Delta t=10^{-4}$  and the finite element
$P_2$.\\
\begin{figure}[!hp]
\begin{center}
\includegraphics[width=7.2cm, height=4.3cm]{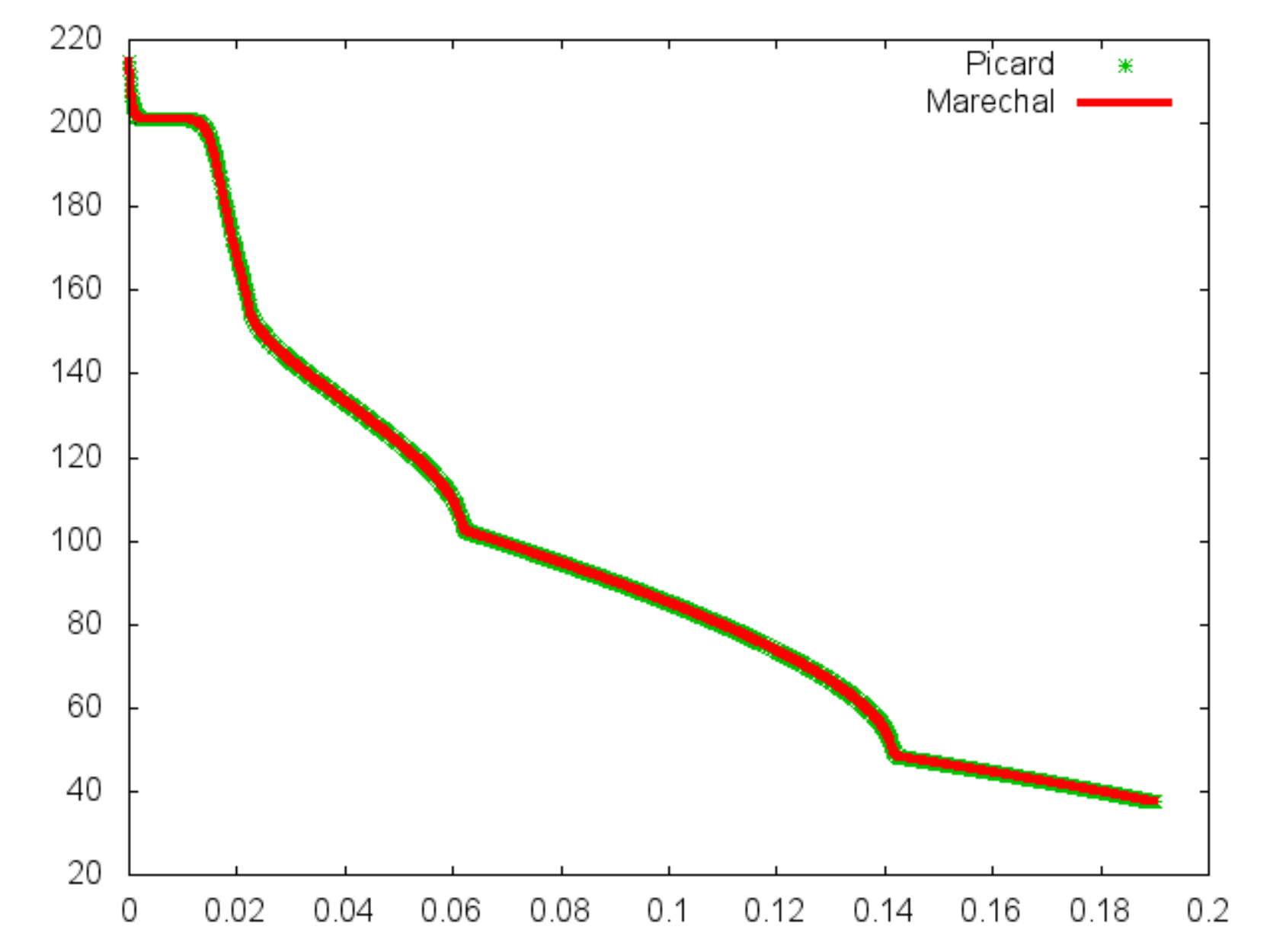}
\end{center}
%
\caption{Energy curves for Picard and for Lemar\'echal.}
\end{figure}
\newpage
\vspace{-0.5cm}
\begin{figure}[!hp]
\begin{center}
\includegraphics[width=5.5cm, height=4cm]{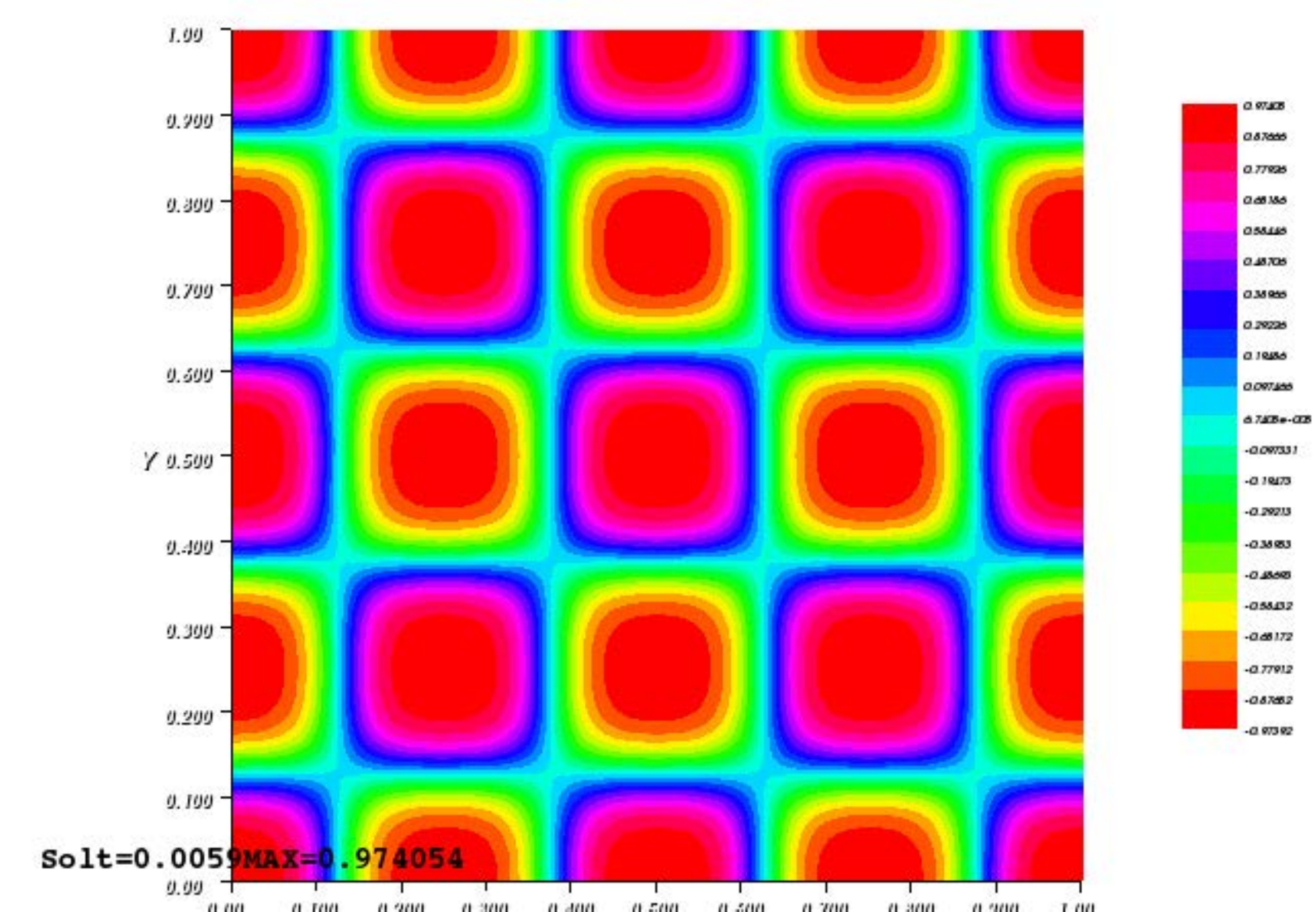}
\includegraphics[width=5.5cm, height=4cm]{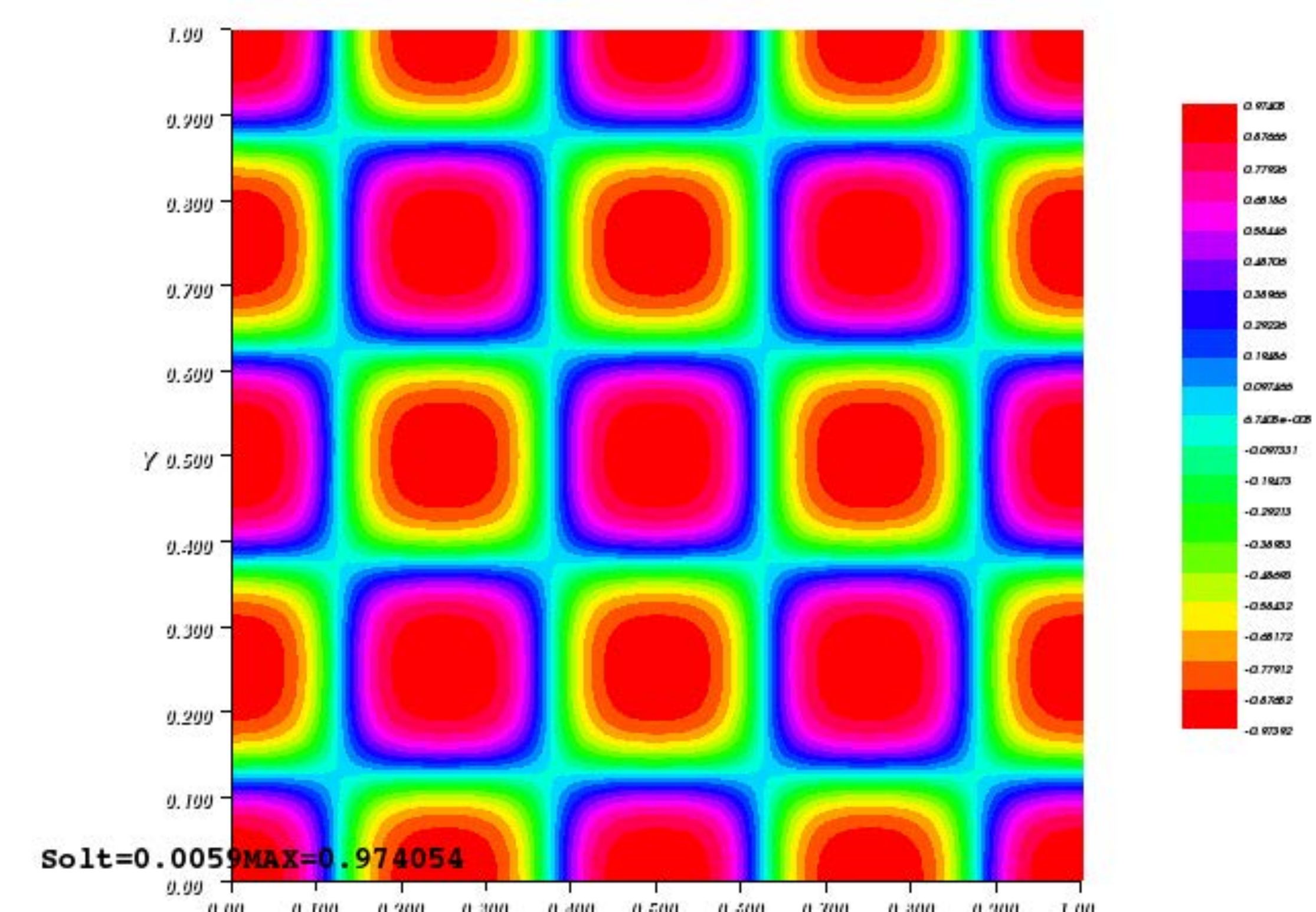}
\end{center}
\caption{Picard (left) and
Lemar\'echal (Right) at $t=0.0059$.}
\end{figure}

%
\vspace{-0.5cm}

\begin{figure}[!hp]
\begin{center}
\includegraphics[width=5.5cm, height=4cm]{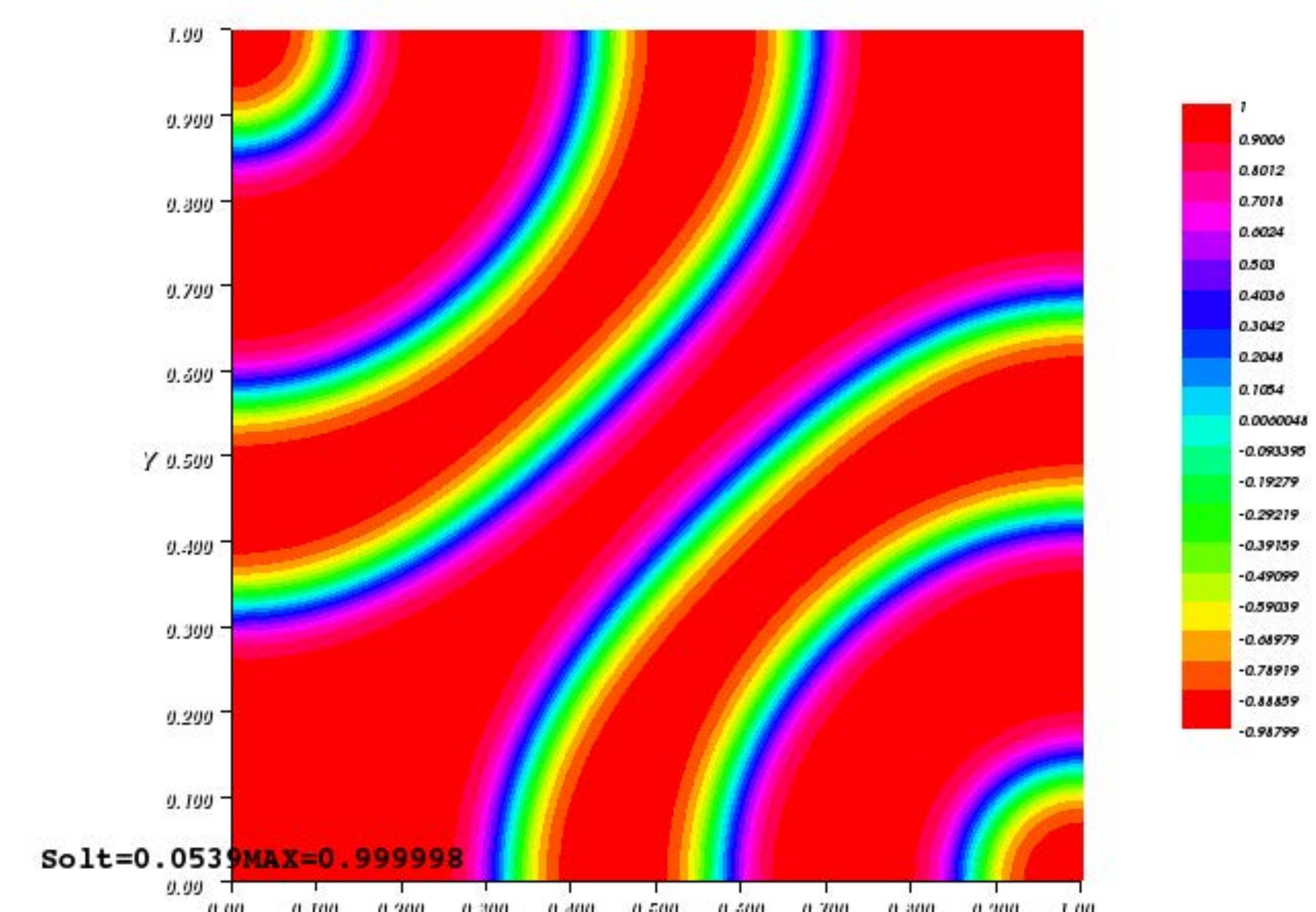}
\includegraphics[width=5.5cm, height=4cm]{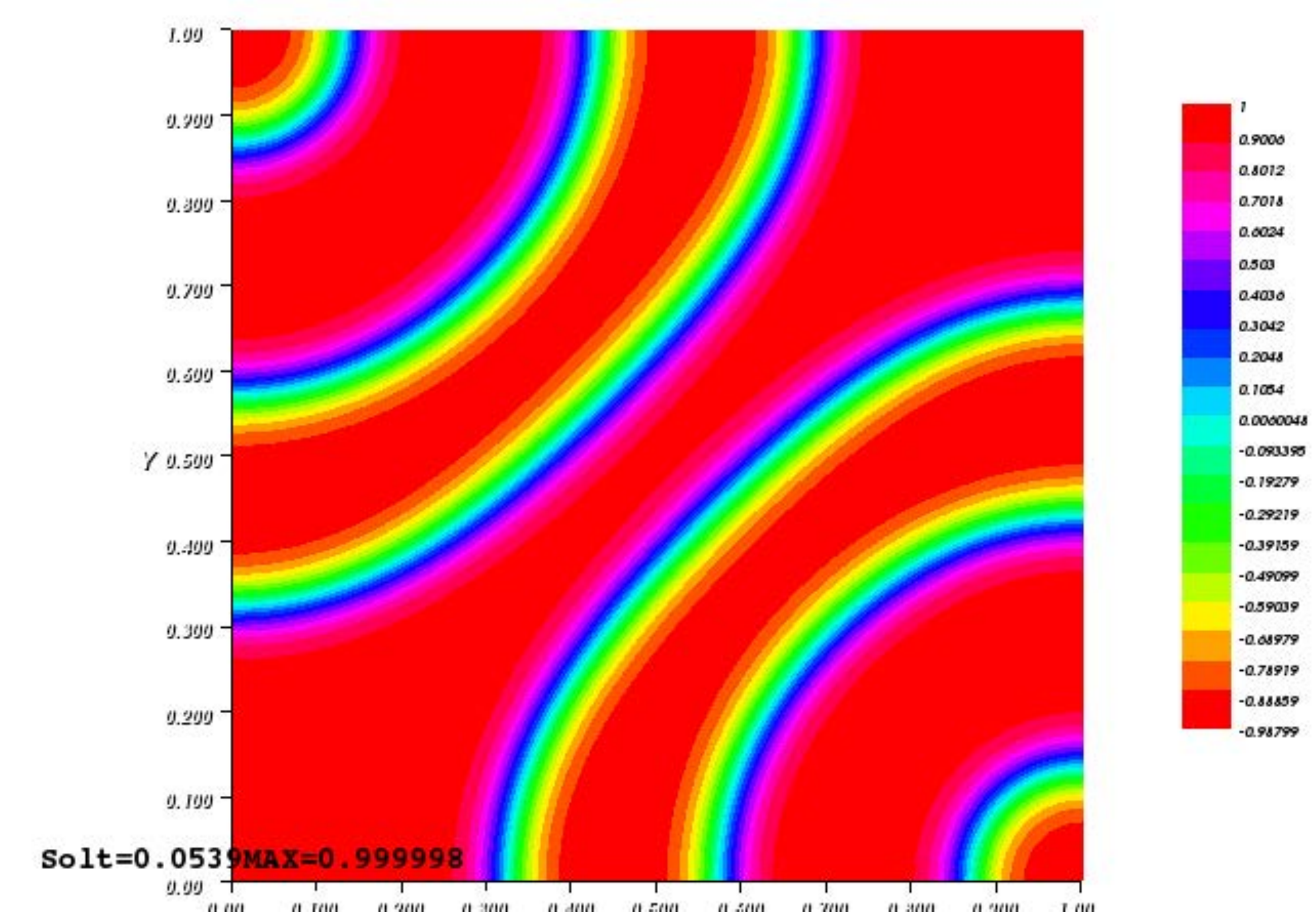}
\end{center}
\caption{Picard (left) and
Lemar\'echal (Right) at $t=0.0539$.}
\end{figure}
%

%

\begin{figure}[h!]
\begin{center}
\includegraphics[width=5.5cm, height=4cm]{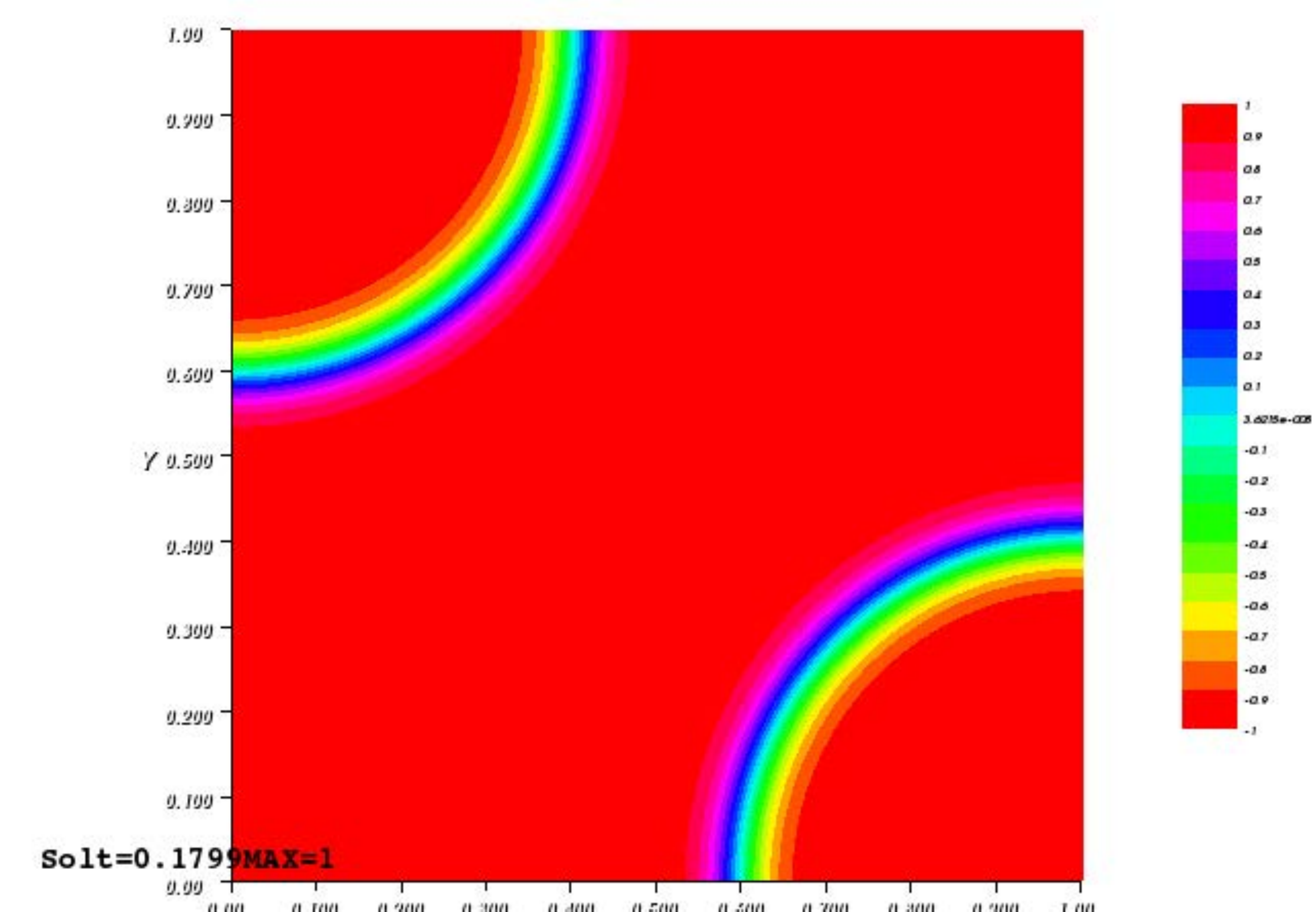}
\includegraphics[width=5.5cm, height=4cm]{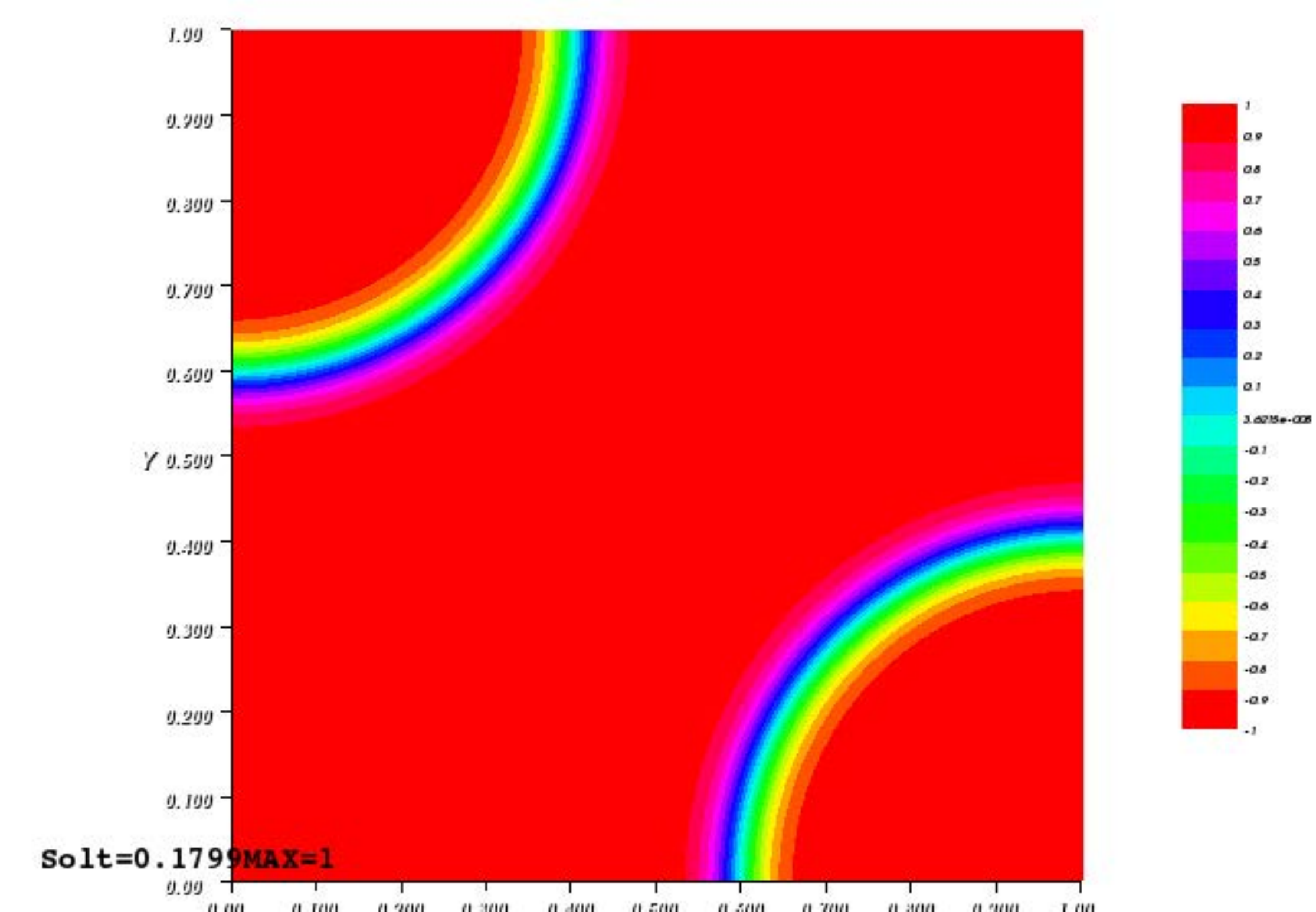}
\end{center}
%
\caption{Picard (left) and Lemar\'echal (right) at $t=0.1799$.}
\end{figure}
\subsection{How to choose $W_H$ and $V_h$}
A central question for the bi-grid method is the choice of the two approximation spaces $W_H$ and $V_h$. We have to balance two criteria:
\begin{itemize}
\item the CPU reduction capabilities brought by the bi-grid
scheme: the most important part of the computations is realized in
the coarse space (nonlinear iterations). This is directelty related
to the ratio of the dimensions of $W_H$ and $V_h$, that we denote
by $DR=\Frac{\mbox{dim($W_H$)}}{\mbox{dim($V_h$)}}$.
\item the scale separation in frequency that allows to make a
correction through a simplified scheme on $V_h$. It appears that a
good indicator is that the fine space correction term $z$ is
 in norm, this means that the approximation on the coarse
space is sufficiently correct to be an acceptable approximation on
the fine space once prolongated. According to the general case,
see Proposition \ref{z_estimates}, an indicator of the $L^2$ norm
of $z$ is controlled by the interpolation error in $W_H$.
\end{itemize}
Estimates on the relative step size of $W_H$ and $V_h$ in the case $W_H\subset V_h$ have been obtained in different contexts, see \cite{MarionXu,Xu} for bi-grid Nonlinear Galerkin (reaction-diffusion problem) and
\cite{AbbGirSay2} for Navier-Stokes time dependent equations.\\

Here the condition $W_H\subset V_h$ is not mandatory and we can then define a mixed finite element scheme,
 the compatibillty condition between the FEM is given in proposition \ref{graph_inf_sup}.\\

We give hereafter possible choices for $W_H$ and $V_h$ in several situations:
\begin{itemize}
\item  The case $W_H\subset V_h$
\begin{itemize}
\item When ${\cal T}_H={\cal T}_h$:
$$
W_H=\{v_h\in{\cal C}^{0}({\bar \Omega}), v_h|_{K} \in P_p, \forall K \in {\cal T}_h\},
\
V_h=\{v_h\in{\cal C}^{0}({\bar \Omega}), v_h|_{K} \in P_q, \forall K \in {\cal T}_h\},
$$
with $q>p$. For instance $p=1, q=2$ or $q=3$.\\
\\
Following Proposition \ref{z_estimates}, it is easy to see that
$$
\|u-\Pi_hu\|_{L^2(\Omega)}\le C h^{q+1} \|u \|_{H^{q+1}(\Omega)}
 \mbox{ and } \|u-\Pi_Hu\|_{L^2(\Omega)}\le C h^{p+1}\|u \|_{H^{p+1}(\Omega)},
$$
so the {\it a priori} optimal estimation for the prolongation error is $H^{p+1}$.
\item When ${\cal T}_H\subset {\cal T}_h$:
$$
W_H=\{v_h\in{\cal C}^{m}({\bar \Omega}), v_h|_{K} \in P_p, \forall K \in {\cal T}_h\},
\
V_h=\{v_h\in{\cal C}^{m}({\bar \Omega}), v_h|_{K} \in P_p, \forall K \in {\cal T}_h\}.
$$
Another way to build $W_H$ from $V_h$ is to apply a coarsening procedure, we refer the reader
e.g. to \cite{BankXu2}.
\end{itemize}
\item The case $W_H\not \subset V_h$: the {\it a priori} estimates
given by Proposition \ref{z_estimates} still holds and the
relation $H^{p+1}\simeq h^{q+1}$ gives an indication to choose the
relative step sizes $h$ and $H$ of ${\cal T}_H$ and ${\cal T}_h$ .
As above, the condition $h\textless \textless H$ is necessary to
expect a CPU time reduction.
\end{itemize}
\begin{remark}
We focus here on $P_k$ elements but the bi-grid method could apply to FEM spaces built with rectangular or cubic elements $Q_k$.
\end{remark}
%
\subsection{Two-grid schemes for Allen-Cahn Equation}
We now present two bi-grid schemes based on two different choices  of ${\tilde f}(u_h^k,{\tilde u}_h^{k+1},z_h^{k+1})$ in the case $f(u)=u(u^2-1)$:
\begin{itemize}
\item  [i.]${\tilde f}(u_h^k,{\tilde
u}_h^{k+1},z_h^{k+1})=f(u^k_h)$: the correction step of the
bi-grid scheme is a simple high mode stabilization of the
semi-implicit scheme. This choice defines the Scheme 4.1 presented
below. \item [ii.] Linearization for the nonlinear term:
$$
\begin{array}{ll}
{\tilde f}(u_h^k,{\tilde u}_h^{k+1},z_h^{k+1})&=
              \Frac{1}{4\epsilon^2}(u_h^{k}+2\tilde{u}_h^{k+1}u_h^{k}+3\tilde{u}_h^{k+1}-2)z_h^{k+1}\\
              &+
\Frac{1}{4\epsilon^2}
((\tilde{u}_h^{k+1})^2+(u_h^{k})^2-2)(\tilde{u}_h^{k+1}+u_h^{k}).
\end{array}
$$
This choice defines the Scheme 4.2 presented below.
\end{itemize}

\begin{minipage}[H]{16cm}
  \begin{algorithm}[H]
    \caption{Scheme 4.1: Two-grid Stabilized Allen-Cahn equation with correction}\label{StabACZ1}
    \begin{center}
    \begin{algorithmic}[1]
        \State $u_h^{0},u^{0}_H$ given\\
            \For{$k=0,1, \cdots$}
             \State {\bf Solve} $(u^{k+1}_H,\psi_H)+\Delta t(\nabla u_H^{k+1},\nabla \psi_H)=
             (u^{k}_H,\psi_H), \; \forall \psi_H\in
                W_H$
                \State {} \hskip 0.5cm $+\Delta t\Frac{1}{\epsilon^2}(DF(u_H^{k+1},u_H^{k}),\psi_H)$\label{SchemaRef}
              \State {\bf Solve} $({\tilde u}^{k+1}_h-u_H^{k+1},\phi_h)=0,\; \forall \phi_h\in V_h$
               \State {\bf Solve} $(1+\tau \Delta t)(z^{k+1}_h,\phi_h)
               +\Delta t(\nabla z^{k+1}_h,\nabla \phi_h)
               =(1+\tau \Delta t)(z^{k}_h,\phi_h)$
              \State {}\hskip 5.cm $- \Delta t(\nabla {\tilde u}_h^{k+1},\nabla \phi_h)
             -({\tilde u}^{k+1}_h-{\tilde u}^{k}_h,\phi_h)$
              \State {}\hskip 5.cm $-\Frac{\Delta t}{4\epsilon^2}(u_h^{k}((u^{k})^2-1),\phi_h) \forall \phi_h\in V_h$
              \State {\bf Set}  $u_h^{k+1}={\tilde u}_h^{k+1}+z_h^{k+1}$
            \EndFor
    \end{algorithmic}
    \end{center}
    \end{algorithm}
\end{minipage}
\medskip
\begin{remark}
It is important to note that the above scheme can be implemented very simply without computing explicitly the sequence $z^k_h$: indeed, we can rewrite the correction step as
$$
\begin{array}{ll}
(u^{k+1}_h,\phi_h)& \\
+\Delta t \tau (u^{k+1}_h-u^{k}_h,\phi_h)&  =(u^{k}_h,\phi_h)+\Delta t \tau (u^{k+1}_H-u^{k}_H,\phi_h)-\Delta t\Frac{1}{\epsilon^2}(f(u_h^{k}),\phi_h), \; \forall \phi_h\in V_h.\\
+\Delta t(\nabla u_h^{k+1},\nabla \phi_h) &
\end{array}
$$
\end{remark}
\medskip
\begin{minipage}[H]{16cm}
  \begin{algorithm}[H]
    \caption{Scheme 4.2: Two-grid Stabilized Allen-Cahn equation with correction}\label{StabACZ2}
        \begin{center}
    \begin{algorithmic}[1]
        \State $u_h^{0},u^{0}_H$ given\\
            \For{$k=0,1, \cdots$}
             \State {\bf Solve} $(u^{k+1}_H,\psi_H)+\Delta t(\nabla u_H^{k+1},\nabla \psi_H)=
             (u^{k}_H,\psi_H), \; \forall \psi_H\in
                W_H$
                \State {} \hskip 0.5cm $+\Delta t(\Frac{1}{\epsilon^2}DF(u_H^{k+1},u_H^{k}),\psi_H)$\label{SchemaRef}
              \State {\bf Solve} $({\tilde u}^{k+1}_h-u_H^{k+1},\phi_h)=0,\; \forall \phi_h\in V_h$
               \State {\bf Solve} $(1+\tau \Delta t)(z^{k+1}_h,\phi_h)
               +\Delta t(\nabla z^{k+1}_h,\nabla \phi_h)
               =(1+\tau \Delta t)(z^{k}_h,\phi_h)$
              \State {}\hskip 5.cm $- \Delta t(\nabla {\tilde u}_h^{k+1},\nabla \phi_h)
             -({\tilde u}^{k+1}_h-{\tilde u}^{k}_h,\phi_h)$
              \State {}\hskip 5.cm $-\ds\Frac{\Delta
              t}{4\epsilon^2}((u_h^{k}+2\tilde{u}_h^{k+1}u_h^{k}+3\tilde{u}_h^{k+1}-2)z_h^{k+1},\phi_h)$
\State {}\hskip 5.cm $-\ds\frac{\Delta t}{4\epsilon^2}
(((\tilde{u}_h^{k+1})^2+(u_h^{k})^2-2)(\tilde{u}_h^{k+1}+u_h^{k}),\phi_h),
\forall \phi_h\in V_h$
              \State {\bf Set}  $u_h^{k+1}={\tilde u}_h^{k+1}+z_h^{k+1}$
            \EndFor
    \end{algorithmic}
     \end{center}
    \end{algorithm}
\end{minipage}
\medskip
\begin{remark}
The stabilization we use here applies on the high modes components,
this can be compared to the methods developed by
Costa-Dettori-Gottlieb and Temam \cite{CDGT} when using spectral
methods (Fourier, Chebyshev) or Chehab-Costa
\cite{ChehabCosta1,ChehabCosta2} in finite differences: in these
cases several grids were used for generating a hierarchy of
fluctuant component in embedded grids and to stabilize them with
as damping term as above; however the approach we propose here can be
non hierarchical and can be applied for many choices of FEM
spaces. In a recent work, one grid stabilization was proposed in
finite difference for parabolic equations using preconditioning
techniques, \cite{BrachetChehab}.
\end{remark}
%
\subsection{Global stabilization vs high mode stabilization}
Before comparing the performances of the bi-grid method and the
one-grid reference scheme, we would like to illustrate the effect
of the high mode stabilization with respect to the global
stabilization, in the time evolution of the  energy.
\subsubsection{High mode stabilization of the semi-implicit scheme (Scheme 4.1)}
Here $\Omega=]0,1[^2$ and two triangulations ${\cal T}_h$ and ${\cal T}_H$ are considered; they are composed of 1681 and 441 triangles respectively. Both
$W_H$ and $V_h$ are FEM spaces built on $P_2$ elements, their dimensions are $\mbox{dim}(W_H)=1681$ and $\mbox{dim}(V_h)=6561$, so
$DR(W_H,v_h)=0.256211$.\\\\
The stabilization applied to the only high mode components allows
to compute the solution with a good accuracy; the energy history
of the one-grid reference scheme is close to the one of the
bi-grid one while the stabilization of the scheme on all the
components of the solution slows down the dynamics. The
stabilization term is of course necessary for the one-grid scheme
but also for the two grid scheme: on the same example as above,
for $S=0.05$ both schemes scheme 4.2 and scheme 2 are unstable.
\begin{figure}[h!]
\begin{center}
\vspace{-1.3cm}
\includegraphics[width=6.cm, height=10.5cm]{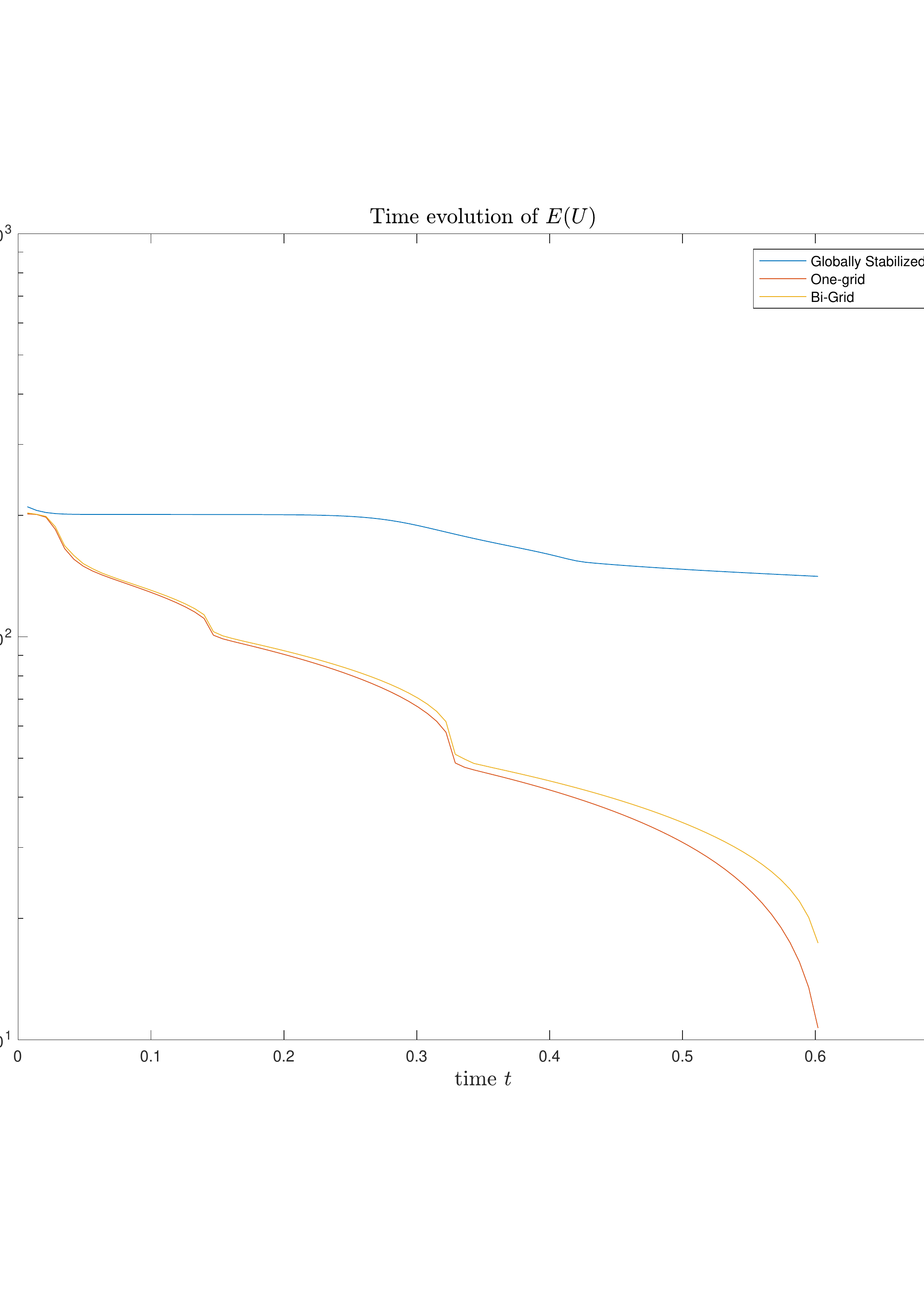}\hskip 1.5cm
\end{center}
\vspace{-2.3cm} \caption{Allen-Cahn equation - Energy vs time -
Comparison between globally stabilized one grid method, one grid
method and high mode stabilized bi-grid method, $\epsilon=0.03$,
$\Delta t=7\times10^{-3}$, $\tau=S/\epsilon^2$, $S=2$ (left),
$S=2$ (right), $u_0(x,y)=\cos(4 \pi x) \cos(4\pi y)$
} \label{Comp_Shen2}
\end{figure}
\subsubsection{High mode stabilization via a proper linearization for the nonlinear term (Scheme 4.2)}
\begin{figure}[!hp]
\vspace{-1.9cm}
\begin{center}
\includegraphics[width=6.cm, height=10cm]{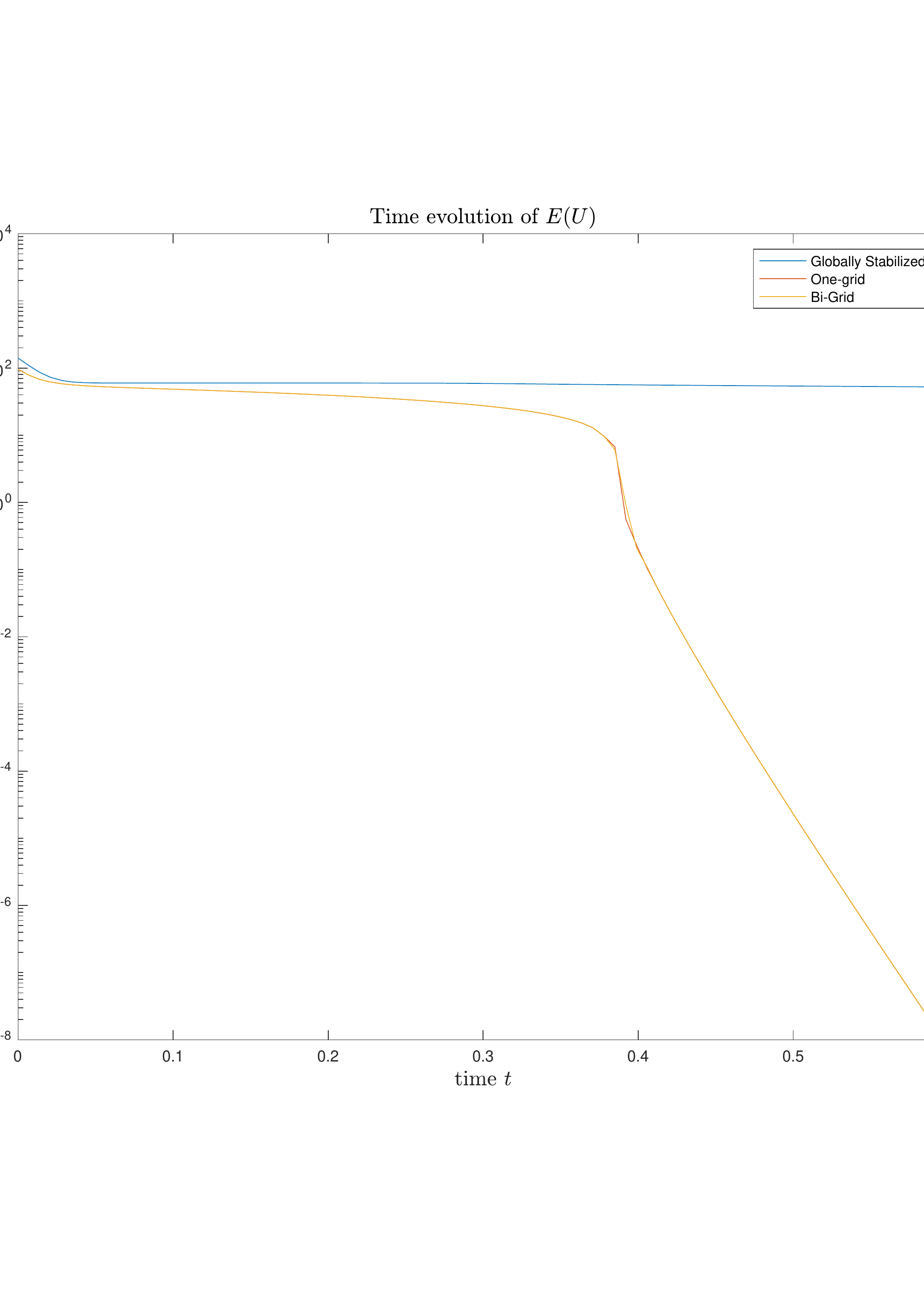}\hskip 1.5cm
\includegraphics[width=6.cm, height=10cm]{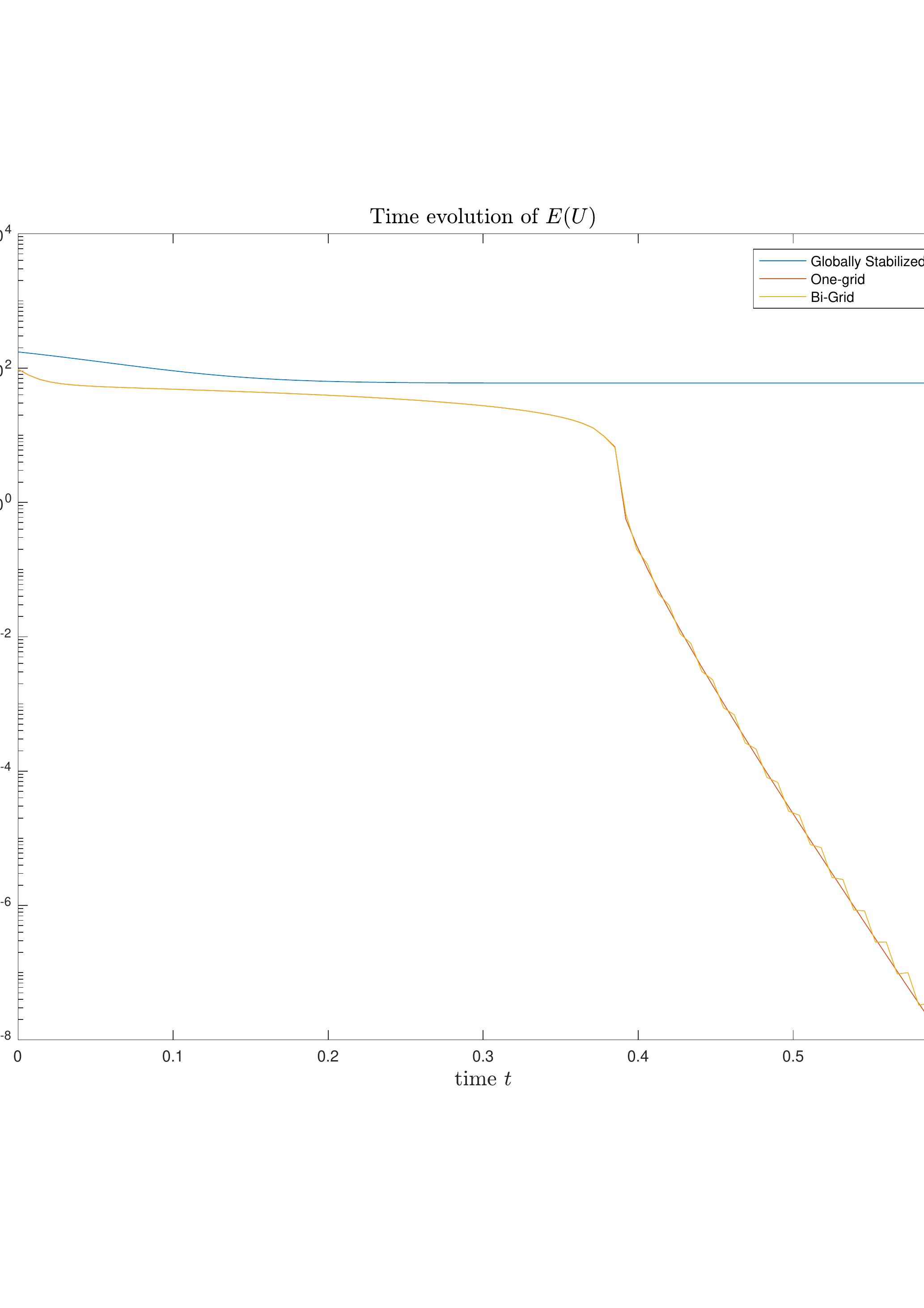}\\
\vspace{-2.3cm}
\end{center}
\caption{Allen-Cahn equation - Energy vs time - Comparison between
globally stabilized one grid method, one grid method and bi-grid
method. $\epsilon=0.03$, $\Delta t=7\times10^{-3}$,
$\tau=S/\epsilon^2$, $S=1.5$ (left), $S=10$ (right),
$u_0(x,y)=\cos(\pi x) \cos(\pi y)$} \label{Comp_Shen1}
\end{figure}
%
%
\clearpage
\begin{figure}[h!]
\vspace{-1.5cm}
\begin{center}
\includegraphics[width=6.cm, height=10cm]{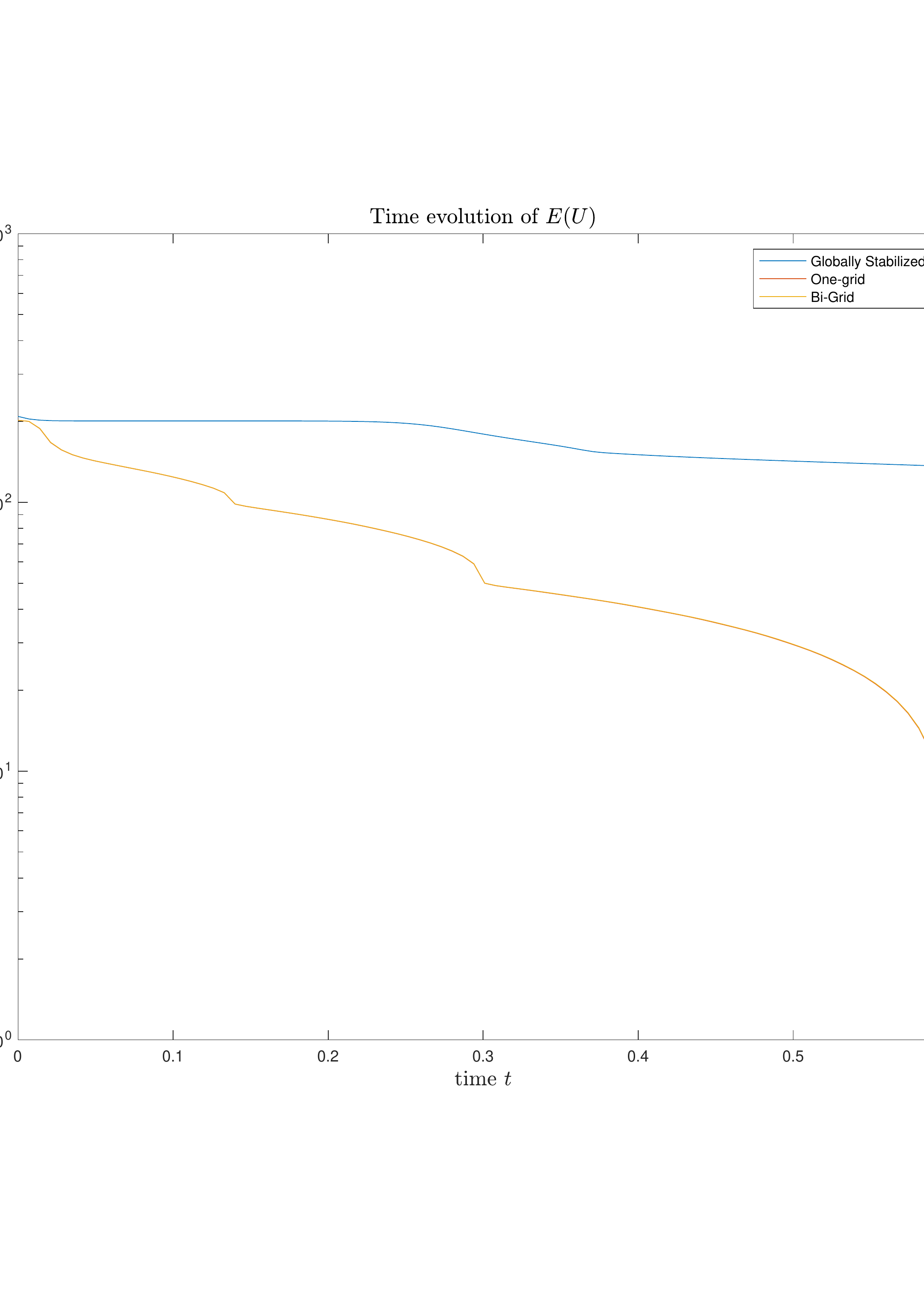}\hskip 1.5cm
\includegraphics[width=6.cm, height=10cm]{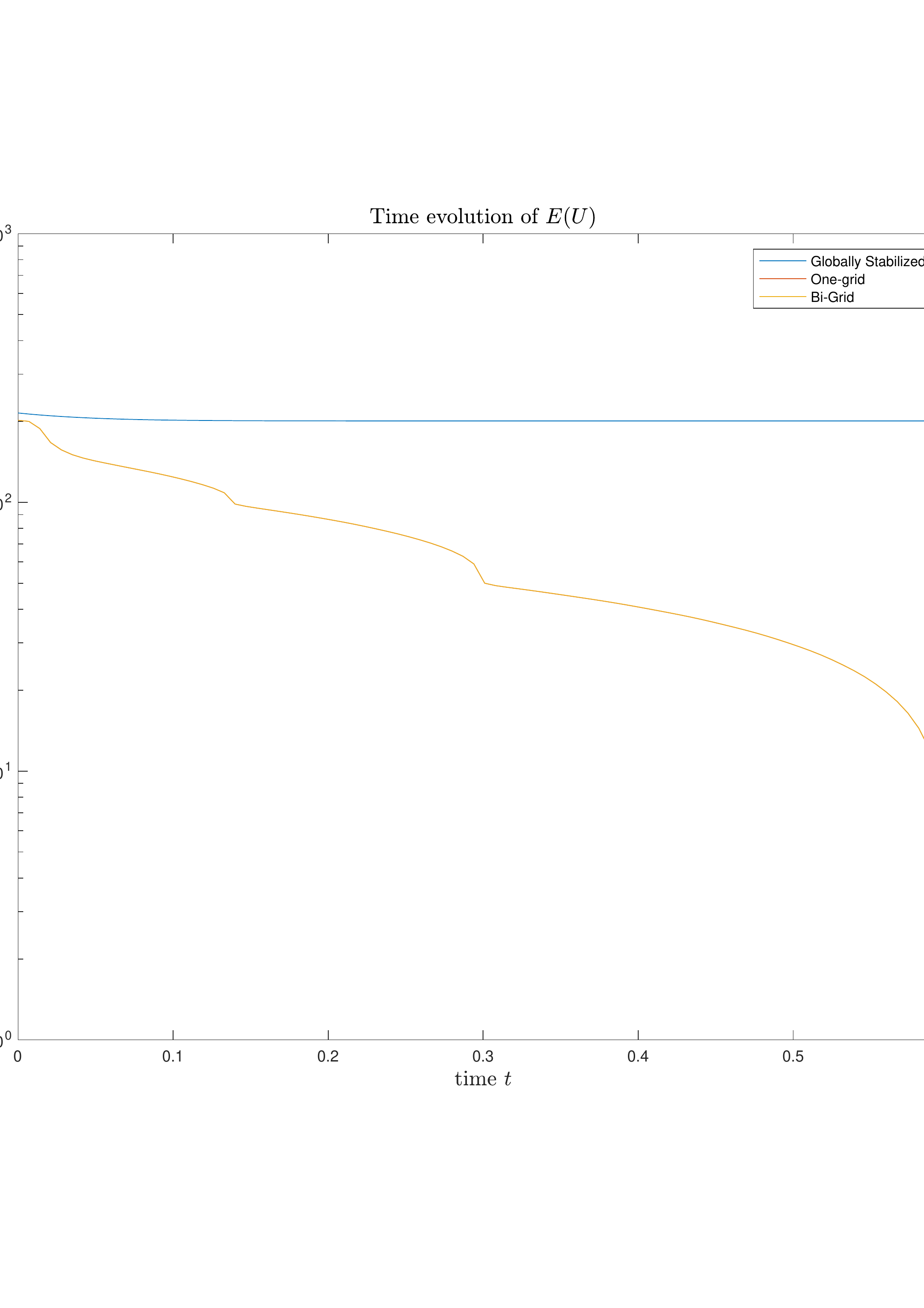}\\
\vspace{-0.2cm}
\end{center}
\vskip -2.cm
\caption{Allen-Cahn equation - Energy vs time - Comparison between Globally stabilized one grid method, one grid method and bi-grid method.
$\epsilon=0.03$, $\Delta
t=7\times10^{-3}$.
$\tau=S/\epsilon^2$, $S=1.5$ (left), $S=10$ (right). $u_0(x,y)=\cos(4 \pi x) \cos(4\pi y)$}
\label{Comp_Shen2}
\end{figure}

We have plotted for different initial data in Figures
\ref{Comp_Shen1} and \ref{Comp_Shen2} the time evolution of the
energy for the 3 methods. We observe  that for same stabilization
parameters, the bi-grid schemes based on high mode damping
restitue an energy dynamics comparable to that of the reference
scheme  while the one-grid stabilization slows down the decreasing
of the energy.  Also, as expected, higher values of $\tau$
produces more important energy slow down for the stabilized
one-grid scheme. Same results are obtained using $P_1$ elements
instead of $P_2$.
\begin{table}[!hp]
\begin{center}
\begin{tabular}{|l|l|l|l|l|l|}
\hline
Scheme & $\epsilon$ & $S$ & $\tau=S/\epsilon^2$ & $\Delta t$ & Stability\\
\hline
\hline
one-grid stab Scheme 2& $0.03$ & $0.1$ & $111.11$& $0.007$&  yes\\
\hline
bigrid Scheme 3 & $0.03$ & $0.1$ &$111.11$ & $0.007$&  yes\\
\hline
\hline
one-grid stab Scheme 2& $0.03$ & $0.05$ &$55.55$ & $0.007$&  no\\
\hline
bigrid Scheme 3 & $0.03$ & $0.05$ & $55.55$& $0.007$&  yes\\
\hline
\hline
one-grid stab Scheme 2& $0.03$ & $0.01$ & $1.11$& $0.007$&  no\\
\hline
bigrid Scheme 3 & $0.03$ & $0.01$ & $11.11$& $0.007$&  yes\\
\hline
\end{tabular}
\end{center}
\caption{$\epsilon=0.03, \ \Delta t=0.007, \ u_0(x,y)=\cos(4 \pi x) \cos(4\pi y)$, $P_2$ elements}
\end{table}
%
%
\vspace{-0.6cm}
\begin{table}[!hp]
\begin{center}
\begin{tabular}{|l|l|l|l|l|l|}
\hline
Scheme & $\epsilon$ & $S$ & $\tau=S/\epsilon^2$ & $\Delta t$ & Stability\\
\hline
\hline
one-grid stab Scheme 2& $0.03$ & $10$ &$111111.11$ & $0.01$&  yes\\
\hline
bigrid Scheme 3 & $0.03$ & $10$ & $111111.11$& $0.01$&  yes\\
\hline
\end{tabular}
\end{center}
\caption{$\epsilon=0.03, \ \Delta t=0.01, \ u_0(x,y)=\cos(4 \pi x) \cos(4\pi y)$, $P_1$ elements}
\end{table}
\clearpage
\begin{table}[!hp]
\begin{center}
\begin{tabular}{|l|l|l|l|l|l|}
\hline
Scheme & $\epsilon$ & $S$ & $\tau=S/\epsilon^2$ & $\Delta t$ & Stability\\
\hline
\hline
one-grid stab Scheme 2& $0.03$ & $0.1$ & $111.11$& $0.007$&  no\\
\hline
bigrid Scheme 3 & $0.03$ & $0.1$ &$111.11$ & $0.007$&  no\\
\hline
\end{tabular}
\end{center}
\caption{$\epsilon=0.03, \ \Delta t=0.01, \ u_0(x,y)=\cos(8 \pi x) \cos(7\pi y)$, $P_2$ elements}
\end{table}
We finish with two illustrations  obtained in the case in which
the same triangulation is used for the two FEM spaces $W_H$ and $V_h$ is
used. We choose
$$
W_H=\{ v_H\in {\cal C}^0({\bar \Omega}) / v_H|_{K}\in P_1, \
\forall K\in {\cal T}_h\} \mbox{ and } V_h=\{ v_h\in {\cal
C}^0({\bar \Omega}) / v_h|_{K}\in P_2, \ \forall K\in {\cal T}_h\}.
$$
We have $W_H\subset V_h$. We first consider the Allen-Cahn equation on the torus $\Omega=\{(x,y)\in R^2 / 1\le x^2+y^2\le 9\}$.
The dimensions are $\mbox{dim}(V_h)=1606$,  $\mbox{dim}(W_H)=424$, $DR(W_H,V_h)=0.26401$.\\
The time evolution of the energy for the three methods is reported
on Figure \ref{Comp_Shen_torus} (left); the conclusions are the
same as in the previous examples: the high mode stabilization
allows to stabilize without deteriorating the dynamics. We also
consider $\Omega=]0,1[^2$ and $\mbox{dim}(V_h)=14641$,
$\mbox{dim}(W_H)=3721$, $DR(W_H,V_h)=0.254149$. The time evolution
of the energy is related on Figure (\ref{Comp_Shen_torus})
(right).
\begin{figure}[h!]
\begin{center}
\vspace{-0.8cm}
\includegraphics[width=6.cm, height=10cm]{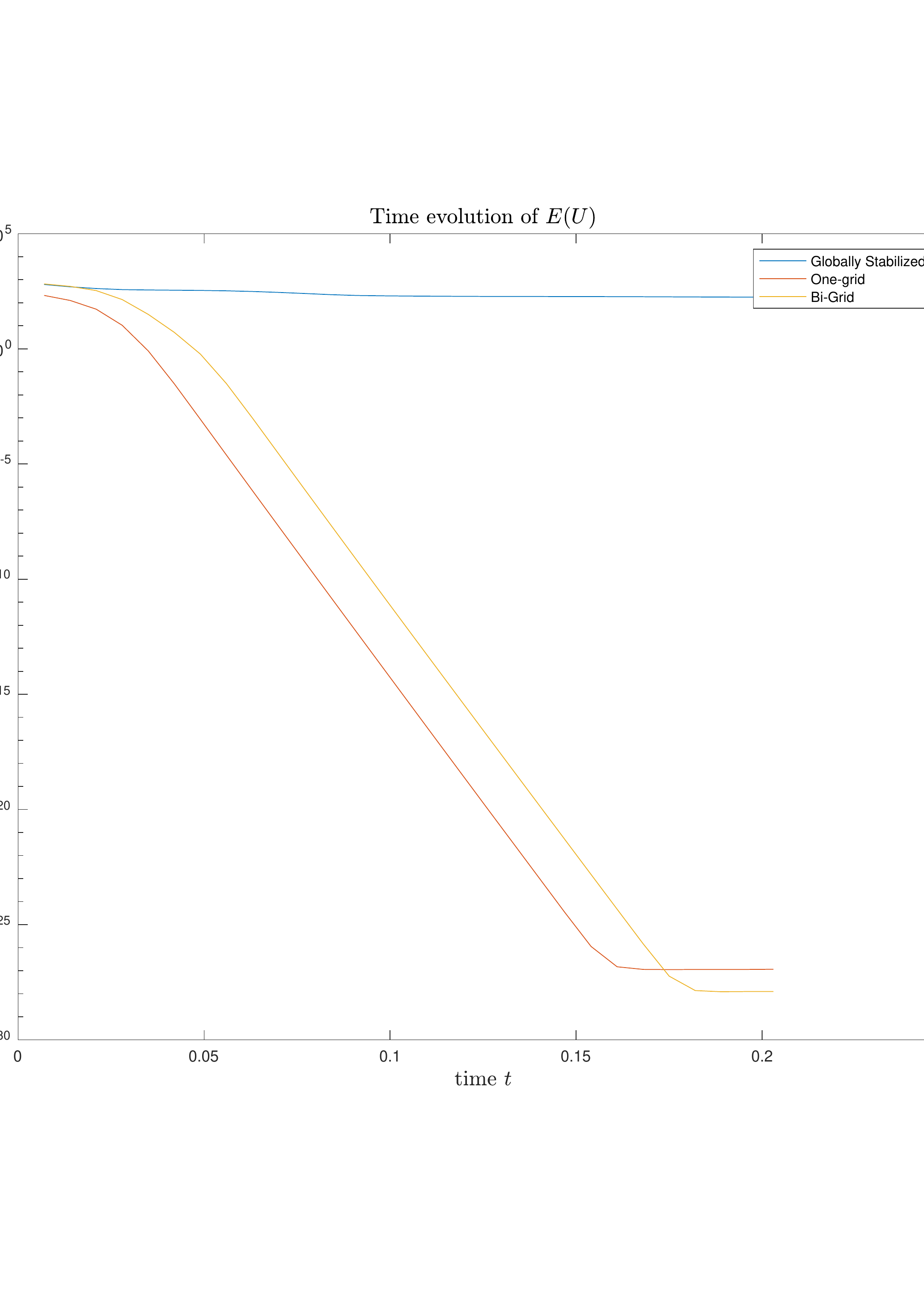}\hskip 1.5cm
\includegraphics[width=6.cm, height=10cm]{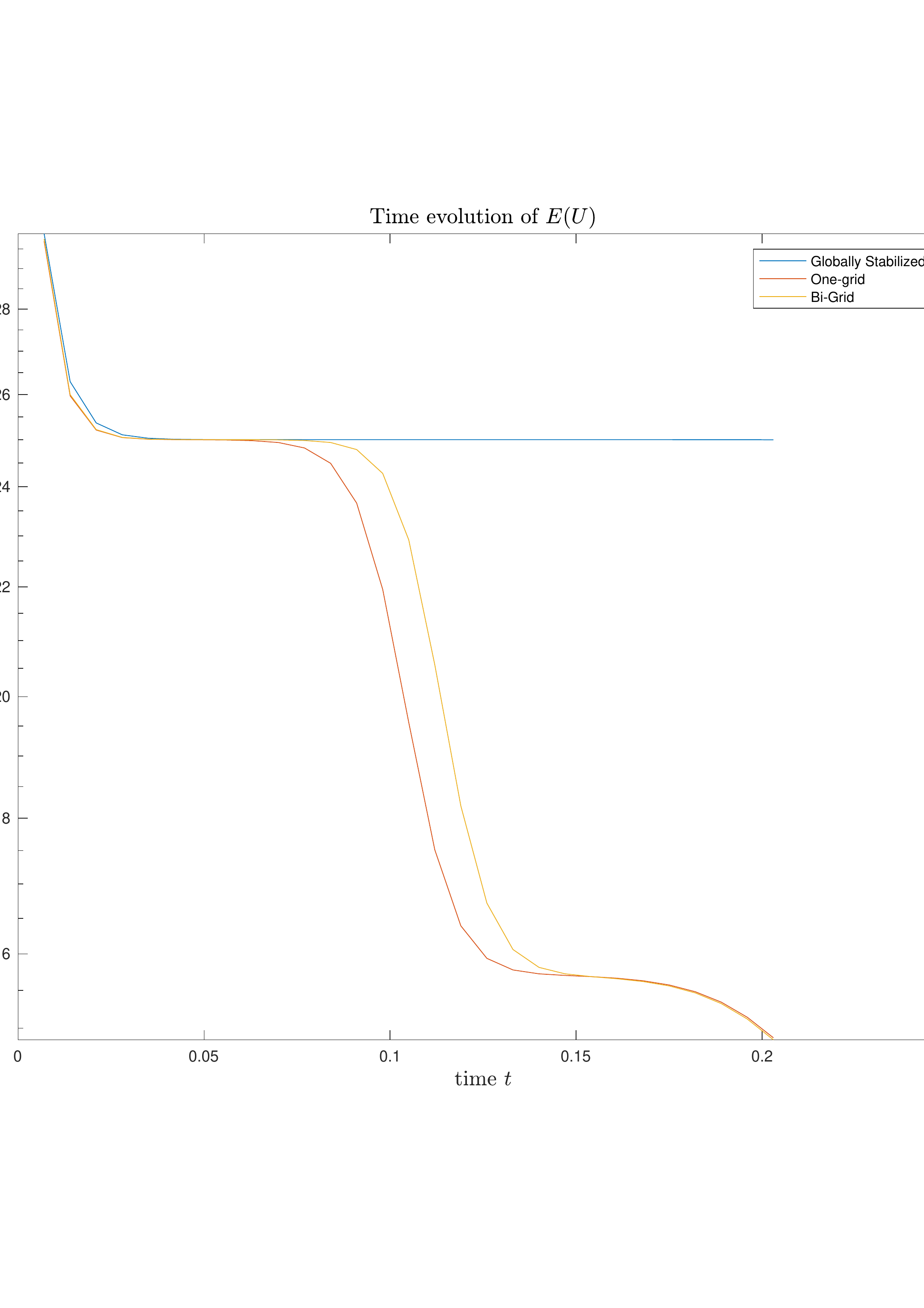}
\end{center}
\vspace{ -2.cm} \caption{(left) $\Omega$ is  the torus
(R=1, R=3)- Energy vs time - Comparison between Globally
stabilized one grid method, one grid method and high mode
stabilized bi-grid method with linearization. $\epsilon=0.1$,
$\Delta t=7\times10^{-3}$. $\tau=S/\epsilon^2$, $S=0.01$ (left),
$S=2$ (right). $u_0(x,y)=cos((x^2+y^2-1)*(x^2+y^2-9))^2$.
(right) $\Omega$ is the unit square- Energy vs time -
Comparison between Globally stabilized one grid method, one grid
method and high mode stabilized bi-grid method with linearization.
$\epsilon=0.1, \Delta t=7\times10^{-3}, \tau=S/\epsilon^2, S=0.1,
u_0(x,y)=\cos(4\pi x)\cos(4\pi y)$, $P_1$ elements on  $W_H$, $P_2$ on $V_h$}
\label{Comp_Shen_torus}
\end{figure}
\clearpage
\subsubsection{Direct Simulation}
Here $\Omega=]0,1[^2$ and two triangulations ${\cal T}_h$ and ${\cal T}_H$ are considered; they are composed of 1681 and 441 triangles respectively.
The dimensions of the FEM spaces are $\mbox{dim}(W_H)=1681$ and $\mbox{dim}(V_h)=6561$, so $DR(W_H,V_h)=0.256211$.\\
To illustrate the robustness of the bi-grid method (Scheme 4.2),
we make comparison with the one grid unconditionally stable
scheme (Scheme 2). We observe that time history of the energy is
comparable and that the time-evolution of the difference between
the solutions produced by these schemes remains small. Finally, we
observe that the CPU time is reduced for the bi-grid scheme, by a
factor larger than 4. \begin{figure}[h!]
\begin{center}
\vspace{-3.2cm}
\includegraphics[width=7.8cm, height=10.5cm]{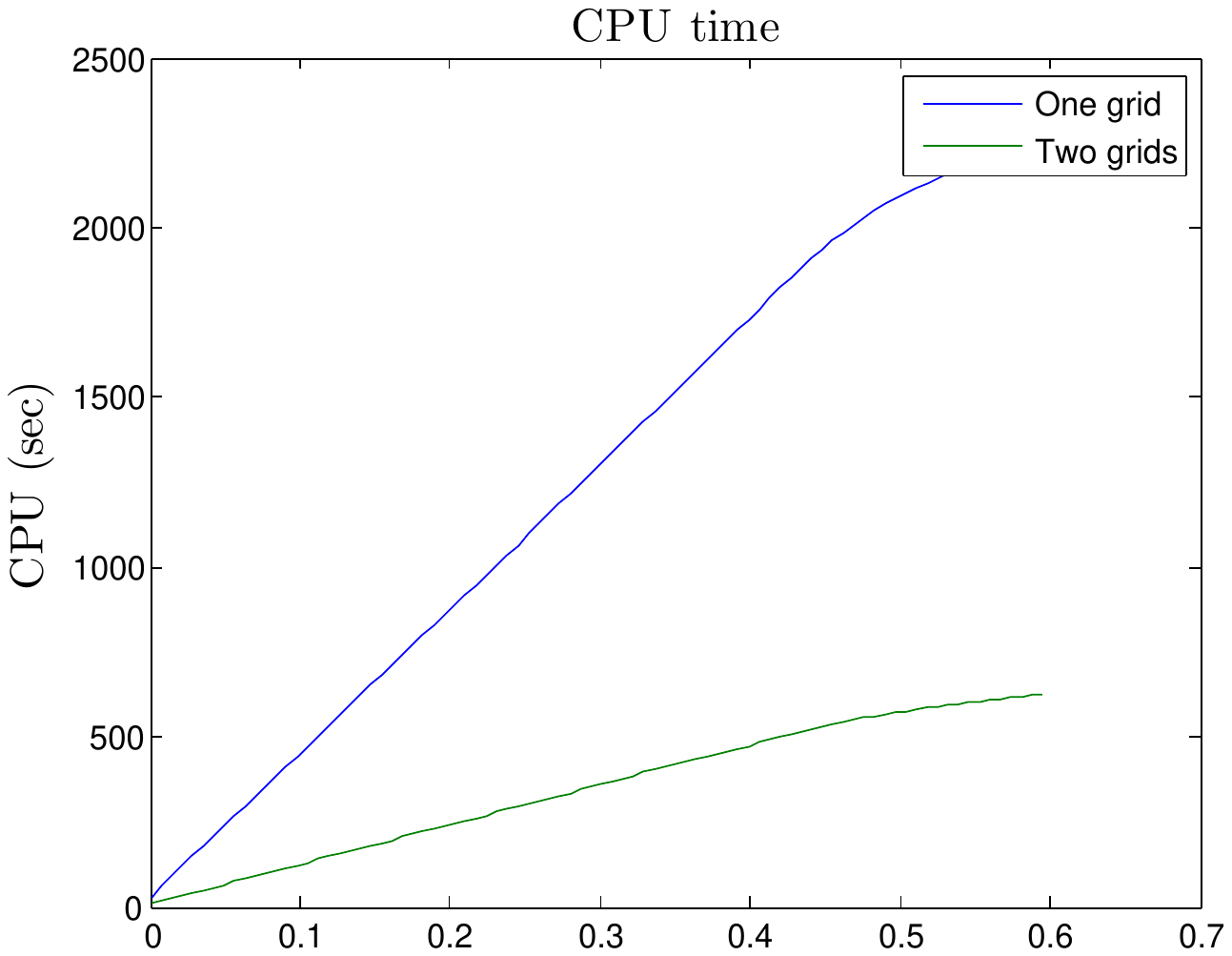}
\includegraphics[width=7.8cm, height=10.5cm]{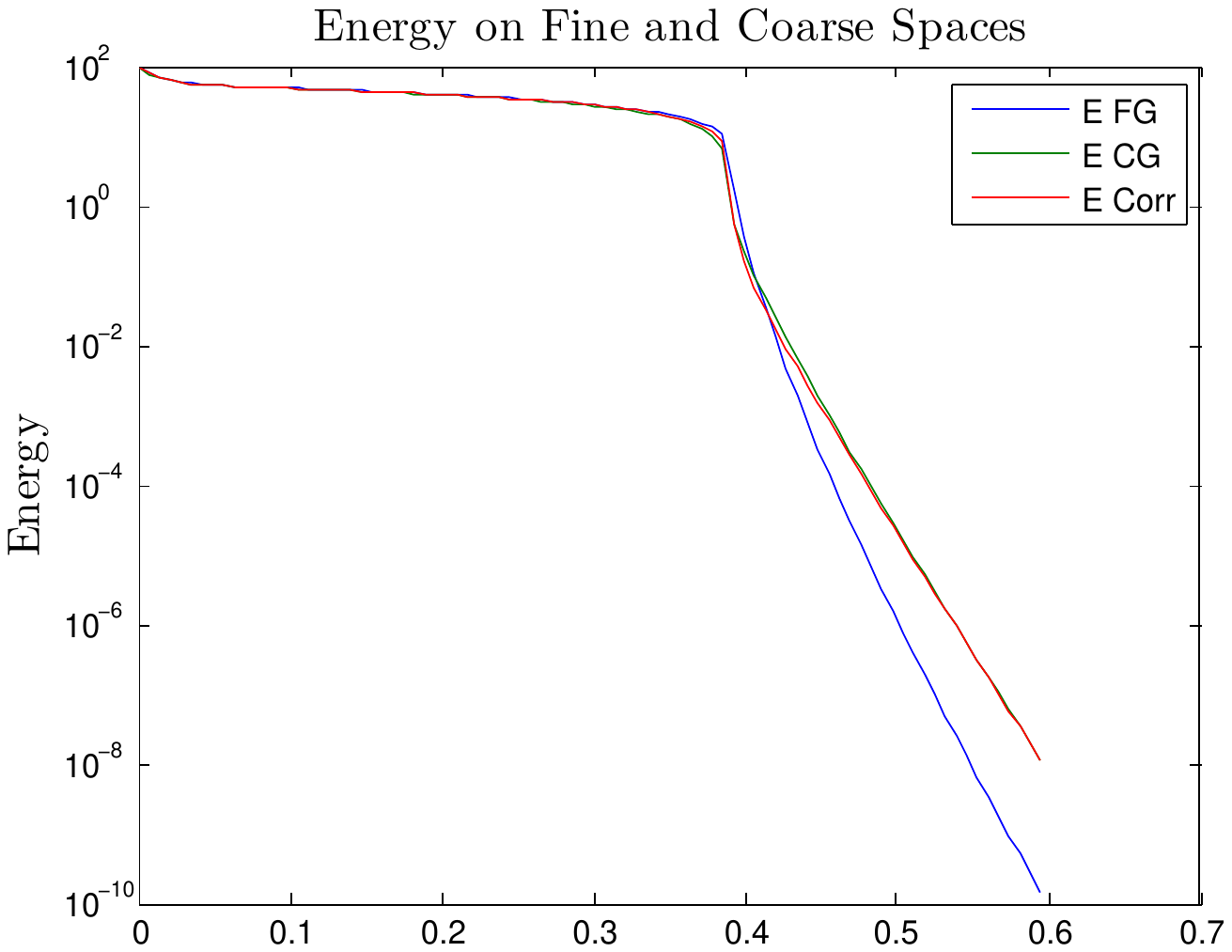}\\
\vskip -4.5 cm
\includegraphics[width=9.5cm,height=8cm]{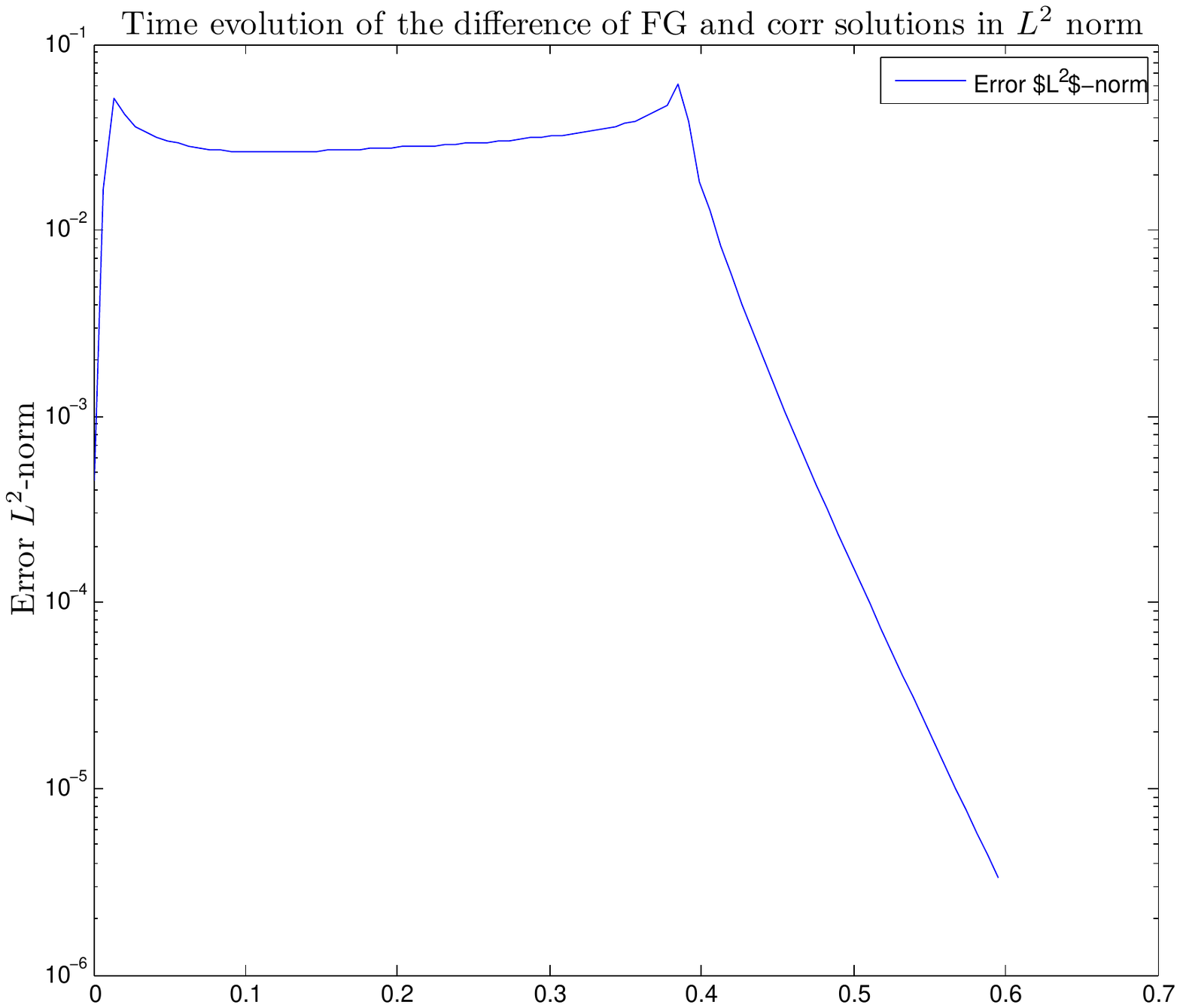}
\vspace{-0.2cm}
\end{center}
\vskip -2.cm
\caption{Allen-Cahn equation
$\epsilon=0.03,\tau=5\times 10^{-3}$ and $\Delta
t=7\times10^{-3}$.}
\end{figure}
\clearpage
\subsubsection{Simulation of an exact solution}
We here always  consider $\Omega=]0,1[^2$ two triangulations
${\cal T}_h$ and ${\cal T}_H$ composed of 1681 and 441 triangles
respectively. The dimensions of the FEM spaces are
$\mbox{dim}(W_H)=1681$ and $\mbox{dim}(V_h)=6561$, so
$DR(W_H,V_h)=0.256211$. We now compare the bi-grid method (Scheme
4.2) and the one-grid Scheme 2 simulating an exact solution: we
choose the function $u_{ex}(x,y,t)=\cos(\pi x)\cos(\pi
y)exp(\sin(\pi t))$ to be the solution, an appropriate  r.h.s. is
added to this aim. As above, we observe that the error is small
while the CPU time is reduced by a factor 3  in Figure (\ref{AC_Exact1}), for $\epsilon=0.5$),  and larger than 4 in Figure (\ref{AC_Exact2}), for $\epsilon=0.3$.
\begin{figure}[h!]
\begin{center}
\includegraphics[width=5.8cm, height=5.5cm]{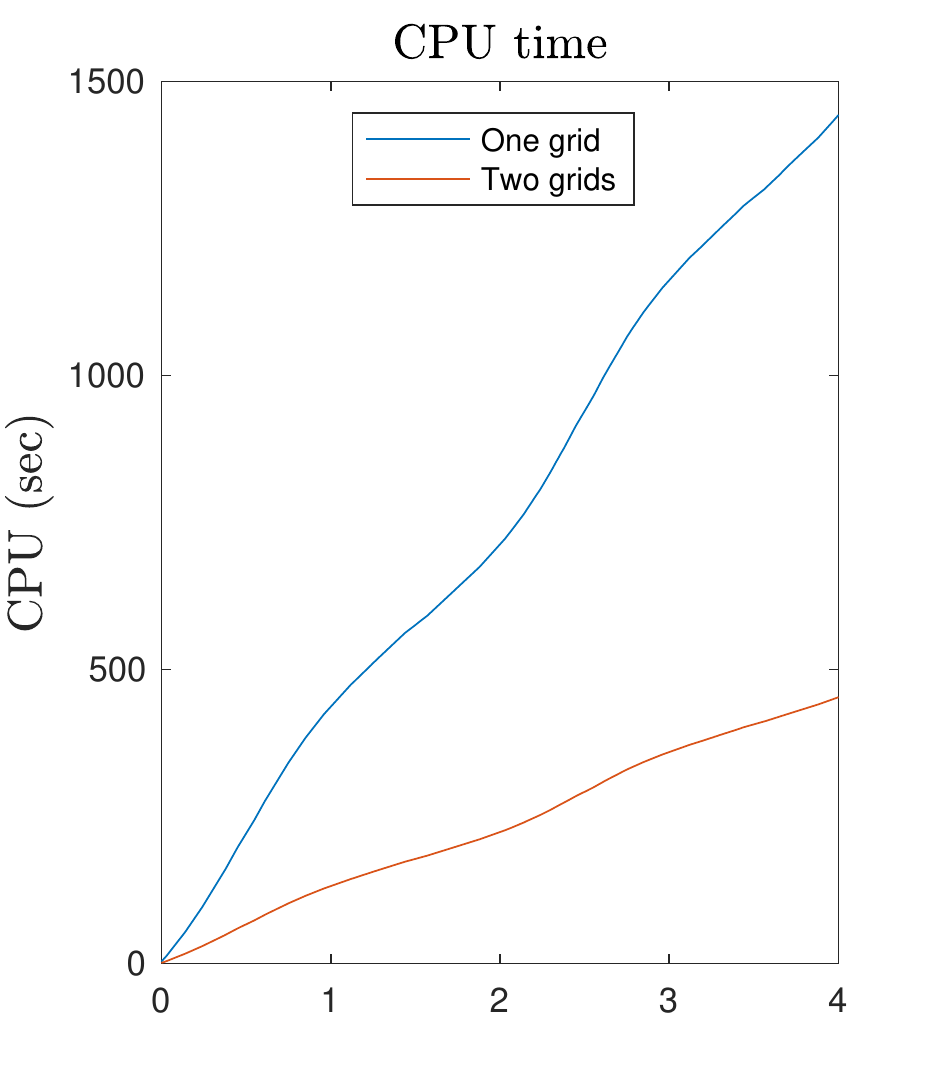}
\includegraphics[width=5.8cm, height=5.5cm]{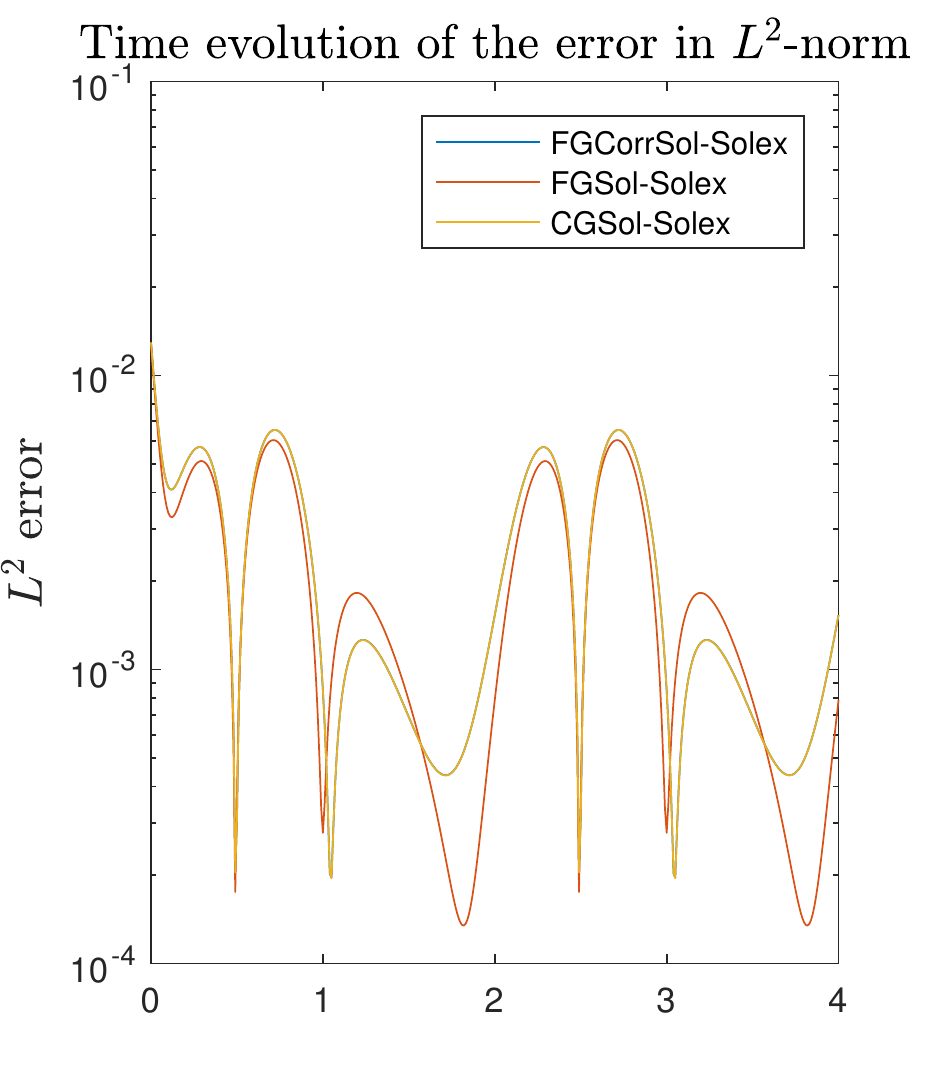}\\
\includegraphics[width=5.8cm, height=5.5cm]{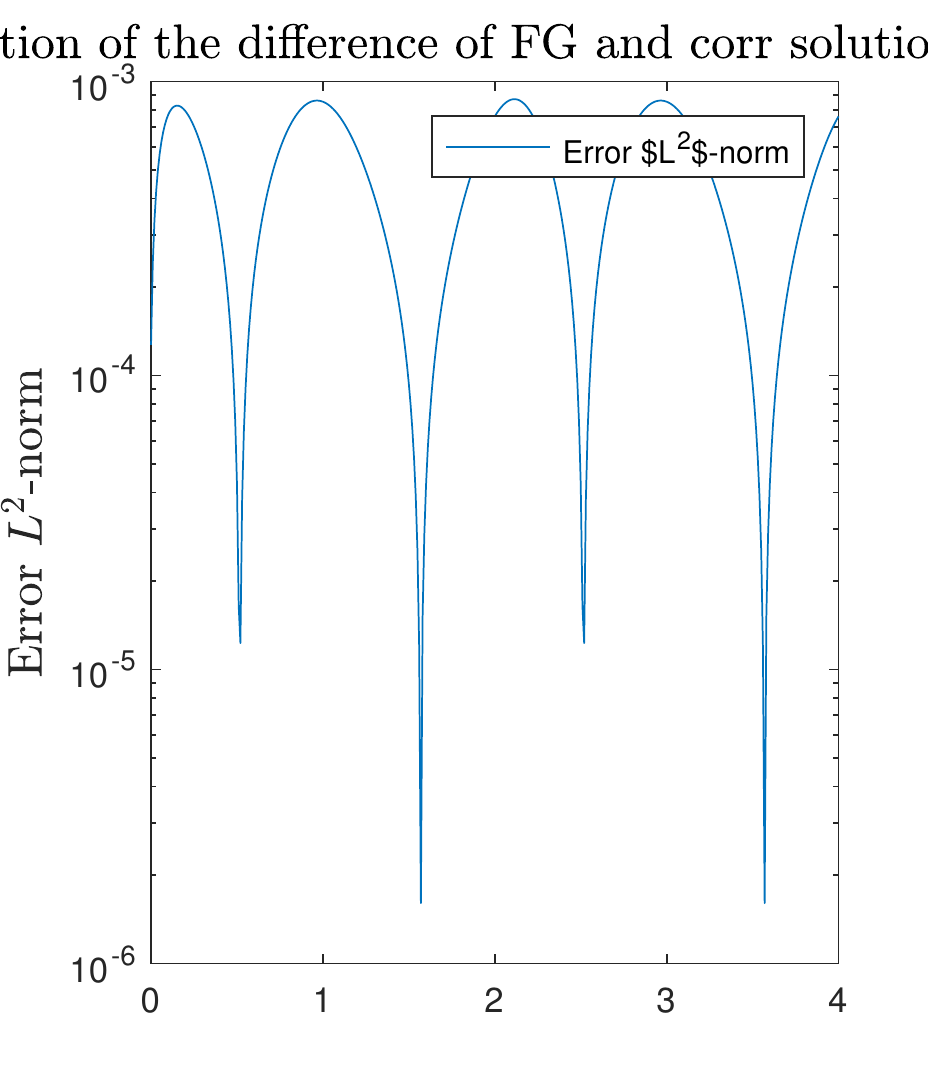}
\includegraphics[width=5.8cm, height=5.5cm]{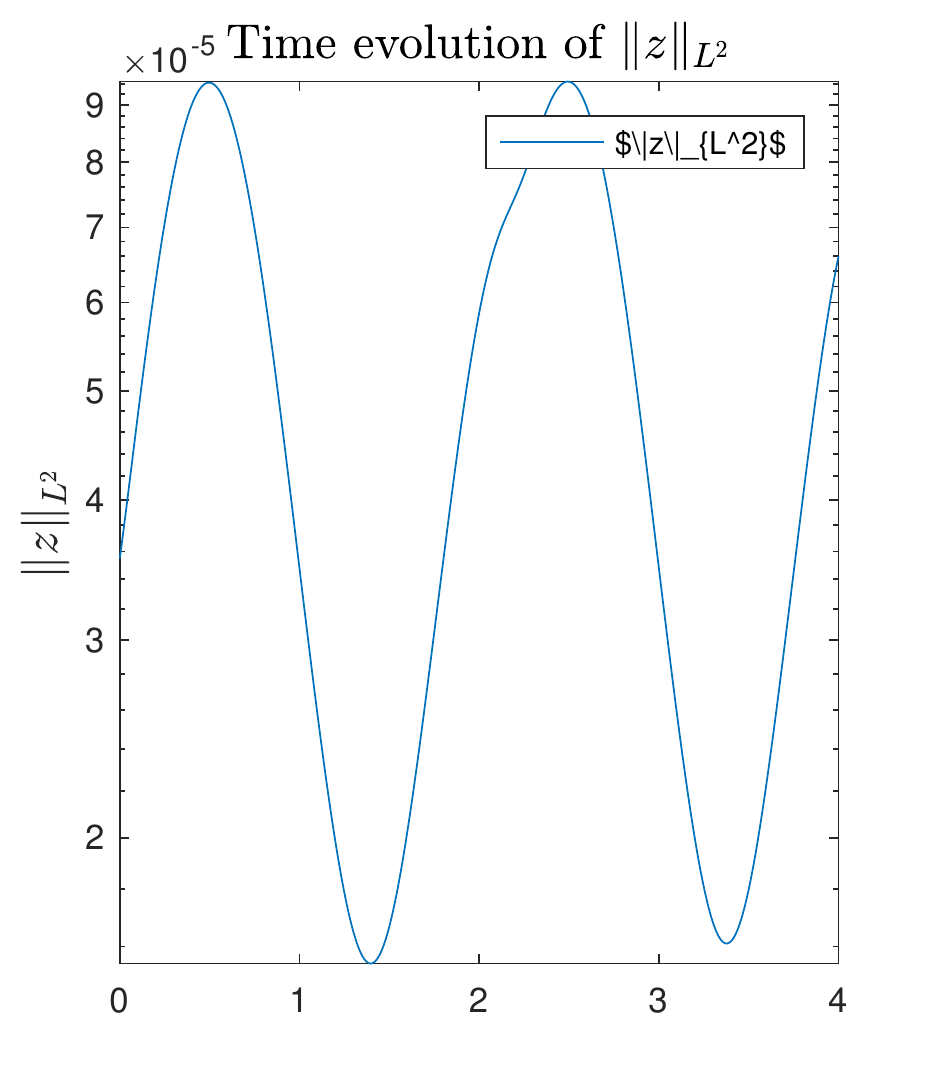}\\
\end{center}
\caption{Allen-Cahn - simulation of an exact solution
$\epsilon=0.5,\tau=12$, $\Delta t=10^{-2}$.}
\label{AC_Exact1}
\end{figure}
\begin{figure}[h!]
\begin{center}
\vspace{-1.9cm}
\includegraphics[width=7.8cm, height=10.5cm]{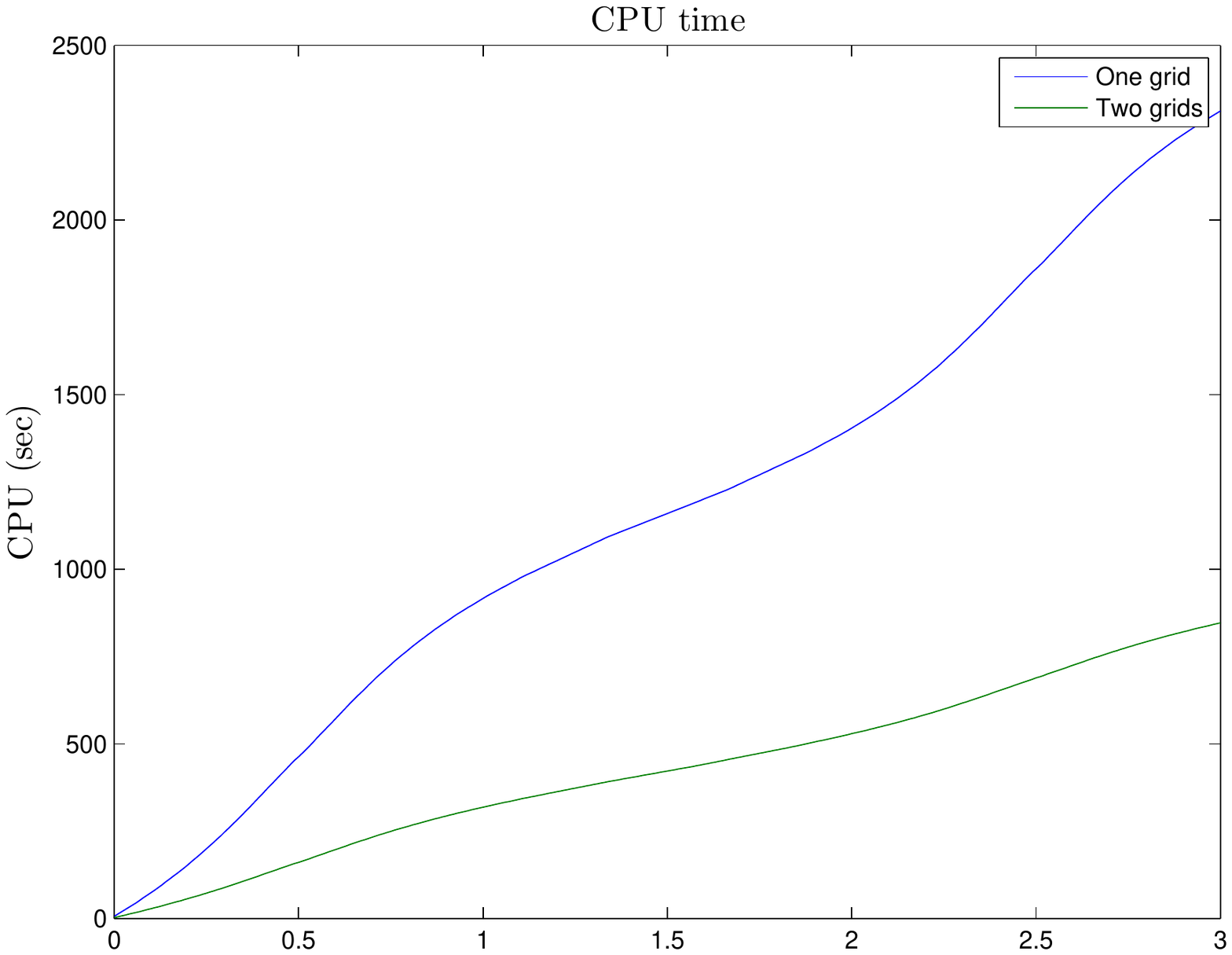}
\includegraphics[width=7.8cm, height=10.5cm]{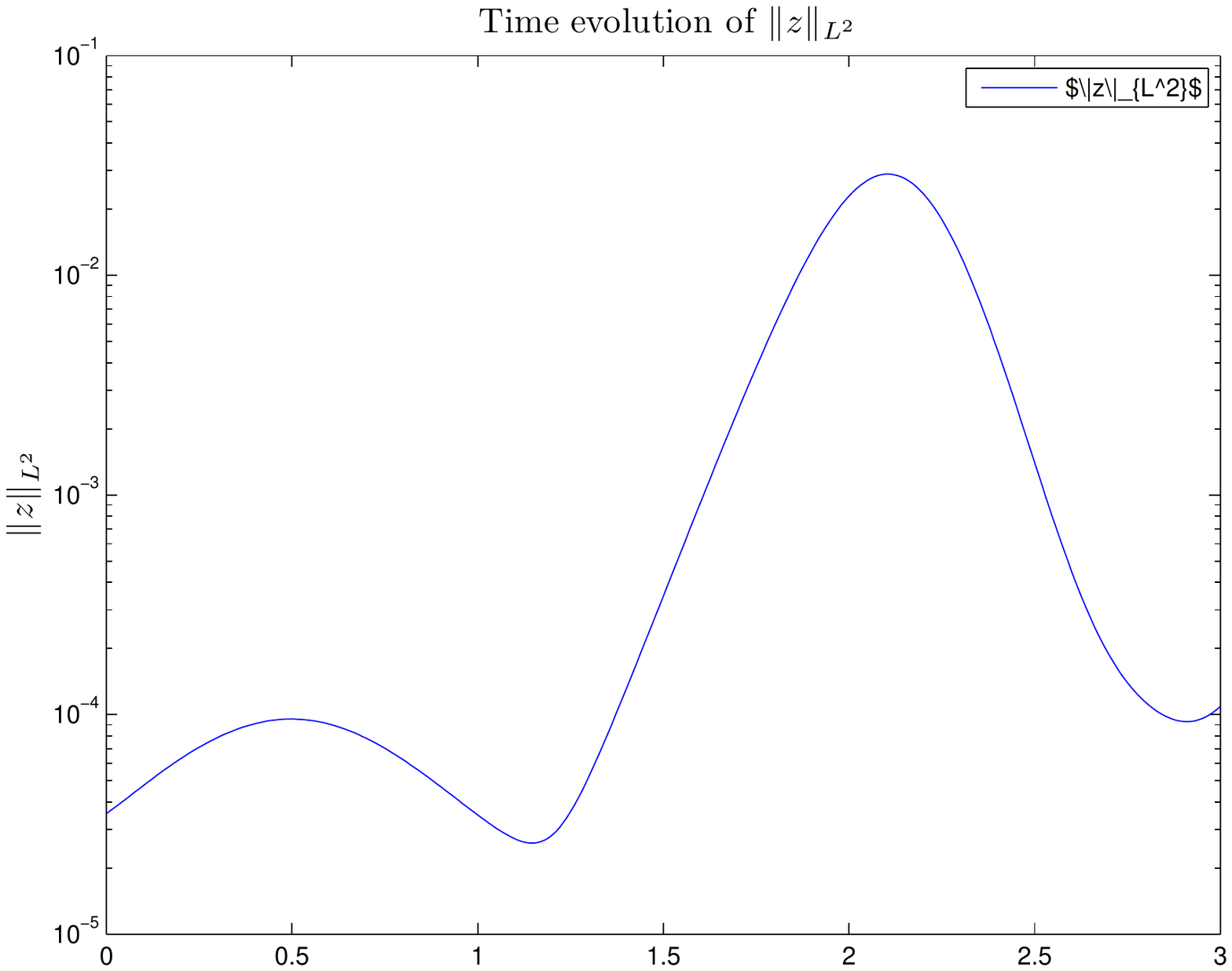}\\
\vspace{-4.9cm}
\includegraphics[width=9.8cm, height=10.5cm]{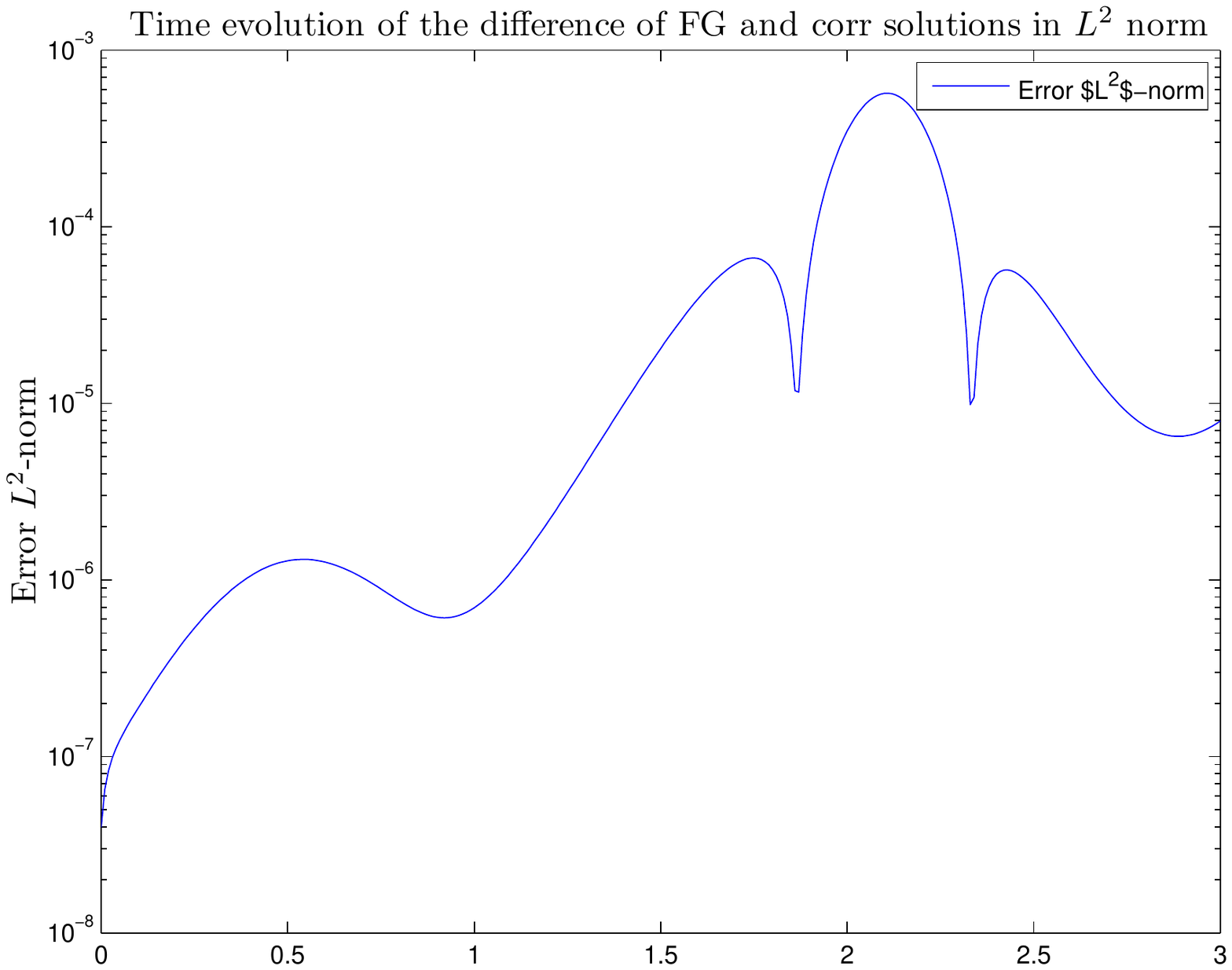}
\end{center}
\caption{Allen-Cahn - simulation of an exact solution
$\epsilon=0.3,\tau=1.8$, $\Delta t=10^{-2}$.}
\label{AC_Exact2}
\end{figure}
\clearpage
\section{Concluding Remarks}
The two-grid  method in Finite Elements for reaction-diffusion equations we have presented
here allows to produce fast and stable iterations, we gave also numerical evidences that our schemes capture important properties
such as energy diminishing, which is fundamental when considering gradient flow models . This can be extended to a larger number of nested spaces,
such as in \cite{DuboisJauberteauTemam1,DuboisJauberteauTemam2,HeLiuSon}, hoping to save much more computing time.\\
The separation of the scale allows to damp mainly the high mode components of the solution, this procedure can be interpreted as high-mode smoother.\\
An important advantage of the bi-grid framework is that it is not needed to work with embedded spaces,
particularly it is not necessary to compute a hierarchical-like basis, the filtering is automatically provided by the prolongation step. We have considered here first reaction-diffusion problems: it is a first study to be done and to be validated before applying the method to more complex systems.
%
\section*{Acknowledgment}
\noindent This project has been founded with support from the
National Council for Scientific Research in Lebanon, the Lebanese
University and C\`EDRE project "PHAFASA".

\end{document}